\numberwithin{equation}{section}
\theoremstyle{plain}
\newtheorem{theorem}{Theorem}[section]
\newtheorem{prop}[theorem]{Proposition}
\newtheorem{lem}[theorem]{Lemma}
\newtheorem{cor}[theorem]{Corollary}
\theoremstyle{definition}
\newtheorem{rem}[theorem]{Remark}
\newcommand{\R}{{\mathbb R}}
\newcommand{\C}{{\mathbb C}}
\newcommand{\Z}{{\mathbb Z}}
\newcommand{\e}{\epsilon}
\DeclareMathOperator{\re}{{\mathfrak{Re}}}
\DeclareMathOperator{\im}{{\mathfrak{Im}}}
\DeclareMathOperator{\dist}{dist}
\newcommand{\loc}{{\rm loc}}
\def\m{\boldsymbol{m}}
\def\bv{\boldsymbol{v}}
\def\Xint#1{\mathchoice
	{\XXint\displaystyle\textstyle{#1}}%
	{\XXint\textstyle\scriptstyle{#1}}%
	{\XXint\scriptstyle\scriptscriptstyle{#1}}%
	{\XXint\scriptscriptstyle\scriptscriptstyle{#1}}%
	\!\int}
\def\XXint#1#2#3{{\setbox0=\hbox{$#1{#2#3}{\int}$}
		\vcenter{\hbox{$#2#3$}}\kern-.5\wd0}}
\begin{document}

\title[The conformal limit for bimerons]
{The conformal limit for bimerons
\\ in easy-plane chiral magnets}
\author{Bin Deng}
\address{Institut de Math\'ematiques de Toulouse, UMR 5219, CNRS, UPS IMT, Universit\'e de Toulouse,
31062 Toulouse Cedex 9, France}
\email{bin.deng@math.univ-toulouse.fr}
\author{Radu Ignat}
\address{Institut de Math\'ematiques de Toulouse, UMR 5219, CNRS, UPS IMT, Universit\'e de Toulouse,
31062 Toulouse Cedex 9, France}
\email{radu.ignat@math.univ-toulouse.fr}
\author{Xavier Lamy}
\address{Institut de Math\'ematiques de Toulouse, UMR 5219, CNRS, UPS IMT, Universit\'e de Toulouse,
31062 Toulouse Cedex 9, France}
\email{xavier.lamy@math.univ-toulouse.fr}

\begin{abstract}
We study minimizers 
$\boldsymbol{m}\colon \mathbb R^2\to\mathbb S^2$ of the energy functional
\begin{align*}
E_\sigma(\boldsymbol{m})
=
\int_{\mathbb R^2}
\bigg(\frac 12 |\nabla\boldsymbol{m}|^2
+\sigma^2  \boldsymbol{ m} \cdot \nabla
\times\boldsymbol{m} 
+\sigma^2 m_3^2
 \bigg)\, dx\,,
\end{align*}
for $0<\sigma\ll 1$,
with prescribed topological degree
\begin{align*}
Q(\boldsymbol{m})=\frac{1}{4\pi}
\int_{\mathbb R^2}\boldsymbol{m} \cdot \partial_1 \boldsymbol{m}\times\partial_2\boldsymbol{m}\, dx =\pm 1\,.
\end{align*}
This model arises in
 thin ferromagnetic films with  Dzyaloshinskii-Moriya interaction and easy-plane anisotropy,
 where these minimizers represent \textit{bimeron} configurations.  
We prove their existence,
and describe them precisely as
 perturbations of specific Möbius maps:  
 we establish in particular that they are localized at scale of order $1/|\ln(\sigma^2)|$.
The proof follows a strategy introduced by Bernand-Mantel, Muratov and Simon (Arch. Ration.
Mech. Anal., 2021)
for
a similar model with easy-axis anisotropy,
but requires several adaptations to deal with the less coercive easy-plane anisotropy 
 and different symmetry properties.
\end{abstract}

\date{\today }
\subjclass[2010]{}
\keywords{}

\maketitle

\section{Introduction}

\subsection{Energy functional and topological degree}

For maps 
\begin{align*}
\m=(m_1,m_2,m_3)\colon\R^2\to\mathbb S^2\subset \R^3\,,
\end{align*}
 and
$\sigma>0$, we consider the energy
\begin{align}
\label{eq:Esigma}
E_{\sigma}(\m)
&
=D(\m) +\sigma^2 \big(  A(\m) + \widetilde H(\m) \big),
\\
    D(\m)
&
=\frac 12 \int_{\R^2}|\nabla \m|^2\, dx\,,
\qquad
A(\m)
=\int_{\R^2} m_3^2\, dx\,,
\nonumber
\\
\widetilde H(\m)
&
=2\int_{\R^2}m_3(\partial_1 m_2-\partial_2 m_1)\, dx\,.
\nonumber
\end{align}
It arises in the description of 
a thin ferromagnetic film 
with
 Dzyaloshinskii-Moriya interaction (DMI) and easy-plane anisotropy (see e.g. \cite{bachmann2023prb,gobel2019magnetic}).
 The map $\m$ represents the magnetization,
the Dirichlet term $D(\m)$ corresponds to the exchange energy,
and the term $A(\m)$ to easy-plane anisotropy favoring the horizontal plane $\{m_3=0\}$.
The DMI term $\widetilde H(\m)$
is well-defined as soon as the two other terms are finite.
Moreover, for $0<\sigma<1/2$, the energy density 
\footnote{In our results, $\sigma$ will be a small positive parameter $0<\sigma\ll1$.}
\begin{align*}
e_\sigma(\m)
&
=
\frac 12|\nabla \m|^2 + \sigma^2 m_3^2 +2\sigma^2 m_3(\partial_1 m_2-\partial_2 m_1)
\\
&
\geq 
\frac {1-2\sigma}{2}
|\nabla \m|^2 + \sigma^2(1-2\sigma) m_3^2 \ge 0\,,
\end{align*}
 is integrable if and only if $D(\m)+A(\m)<\infty$.

If a map $\m\colon\R^2\to\mathbb S^2$ is continuous and has a limit as $|x|\to +\infty$, then it can be identified 
with a continuous map $\tilde \m = \m\circ\Phi^{-1}\colon\mathbb S^2\to\mathbb S^2$, where $\Phi\colon\R^2\cup\lbrace\infty\rbrace
\approx\C\cup\lbrace\infty\rbrace
\to\mathbb S^2$ is the inverse stereographic projection
\begin{align}\label{eq:stereo}
    \Phi(z)=\left(\frac{2z}{1+|z|^2},\frac{|z|^2-1}{1+|z|^2}\right)
    \qquad\forall z\in \mathbb C\cup \lbrace\infty\rbrace\,.
\end{align}
The continuous map $\tilde\m\colon\mathbb S^2\to\mathbb S^2$ carries a topological degree,
which can also be defined if  $\tilde \m\in H^1(\mathbb S^2;\mathbb S^2)$ (see \cite{brezis1995degree}), 
and which characterizes the homotopy class of $\tilde\m$.
In terms of the original map $\m$, this corresponds to the topological degree
\begin{align}\label{eq:Q}
Q(\m)=\frac{1}{4\pi}\int_{\R^2}\m\cdot (\partial_1\m \times \partial_2 \m)\, dx
\in\mathbb Z\,.
\end{align}
see more details in Appendix~\ref{a:crit_sob_plane}, in particular   Corollary~\ref{c:Q}.
The purpose of this article is to describe,
for $0<\sigma\ll 1$,
minimizers  of $E_\sigma$ with unit degree $Q(\m)=\pm 1$, called \emph{bimerons} \cite{bachmann2023prb}.
The precise functional setting will be presented in \S~\ref{ss:statement}.

An analogous question is analysed in \cite{doring2017cvpde,bernand2021arma}
 for a model with easy-axis anisotropy, where the minimizes stand for \emph{skyrmions}.
The results and proofs share similarities, but also many differences and additional difficulties,
and we will carefully compare them after the statement of our main result.
For the easy-plane model considered here, 
results complementary to ours, dealing with bounded domains and a wider range of parameters, are obtained in \cite{bacho-melcher}.
 In the context of bounded domains, boundary magnetic vortices are also studied in the presence of the DMI term, see \cite{IgOf}.

\begin{rem}\label{r:DMI}
For maps $\m\colon\R^2\to\mathbb S^2$ such that 
$D(\m) +A(\m)<\infty$ and
$m_3$ decays   sufficiently fast at $\infty$,
the DMI term $\widetilde H(\m)$ in \eqref{eq:Esigma}  coincides with the classical expression
\begin{align*}
H(\m)
=\int_{\R^2} \m\cdot \nabla\times \m\, dx
=\lim_{R\to\infty}\int_{B_R} \m\cdot \nabla\times \m \, dx \,,
\end{align*}
where $\nabla$ is identified with $(\partial_1,\partial_2,0)$,
 thanks to the identity
\begin{align*}
&
\m\cdot\nabla\times \m - 2m_3(\partial_1 m_2-\partial_2 m_1)
\nonumber 
= 
-\partial_1(m_3m_2) +\partial_2(m_3m_1).
\end{align*}
Under the mere condition that $D(\m)+ A(\m)<\infty$,
we do have
\begin{align*}
\lim_{R_k\to\infty}\int_{B_{R_k}} \m\cdot\nabla\times \m \, dx =\widetilde H(\m)\,,
\end{align*} 
along a sequence $R_k\to\infty$, 
but $H(\m)$ might not be well-defined and the full limit as $R\to +\infty$ might fail to exist. 
\end{rem}

\subsection{Symmetries}

Several groups of geometric transformations play an important role in our analysis: dilations
\begin{align}\label{eq:Drho}
\mathfrak D_\rho \m(x)=\m\Big(\frac x\rho\Big)\,,\quad \rho>0\,,
\end{align}
translations
\begin{align}\label{eq:Tx0}
\mathfrak T_{x_0}\m (x)=\m(x-x_0)\,,\quad  x_0\in\R^2\,,
\end{align}
and corotations
\begin{align}
\mathfrak R_\phi \m(x) 
=R_{\mathbf e_3,\phi}\m(e^{-i\phi}x)\,,
\quad \phi\in\R\,,
\label{eq:Rphi}
\end{align}
where $R_{\mathbf e_3,\phi}\in SO(3)$ is the rotation of axis $\mathbf e_3$ and angle $\phi$ in $\R^3$, and $e^{-i\phi}$ is the rotation 
of angle $-\phi$ in $\R^2\approx\C$.

Dilations have different effects on each energy term in \eqref{eq:Esigma} namely,
for any $\rho>0$,
\begin{align}
D(\mathfrak D_\rho\m)
&
=D(\m)\,,
\nonumber
\\
\widetilde H(\mathfrak D_\rho \m)
&
=\rho \widetilde H(\m)\,,
\nonumber
\\
A(\mathfrak D_\rho \m)
&
=\rho^2 A(\m)\,,
\label{eq:rescaling}
\end{align}
and the topological degree \eqref{eq:Q} is invariant under dilations:
\begin{align*}
 Q(\mathfrak D_\rho \m)=Q(\m)\,.
\end{align*}
As for translations and corotations, they keep both energy and topological degree invariant:
\begin{align}
&
E_\sigma(\mathfrak T_{x_0}\m)=E_\sigma(\mathfrak R_{\phi}\m)=E_\sigma(\m)\,,
\label{eq:Tx0Rphi}
\\
&
Q(\mathfrak T_{x_0}\m)=Q(\mathfrak R_{\phi}\m)=Q(\m)\,,
\quad\forall x_0\in\R^2\,,\forall \phi\in\R\,.
\nonumber
\end{align}
Finally we also note the effect of the
reflection $\m\leadsto -\m$, which keeps the energy invariant but reverses the degree:
\begin{align}\label{eq:reflection}
E_\sigma(-\m)=E_\sigma(\m),\quad Q(-\m)=-Q(\m)\,.
\end{align}
Thanks to this property, we reduce the study of minimizers under the topological constraint $Q(\m)=\pm 1$, to the case $Q(\m)=-1$.

\begin{rem}\label{r:choice_coeff}
One consequence of the scaling properties \eqref{eq:rescaling}  is that 
the specific choice of coefficients in front of each term in \eqref{eq:Esigma} is not restrictive: for any $\lambda,\rho >0$ we have 
\begin{align*}
\lambda E_\sigma(\mathfrak D_\rho \m)
&=
\lambda D(\m) +\lambda\rho\sigma^2\widetilde H(\m) +\lambda\rho^2\sigma^2 A(\m)\,,
\end{align*}
so any result about $E_\sigma$ 
can be translated into a result about an energy with arbitrary coefficients $a=\lambda$, $b=\lambda\rho\sigma^2$, $c=\lambda\rho^2\sigma^2>0$ in front of the three energy terms.
With these general coefficients, the regime $\sigma^2\ll 1$ considered in this article corresponds to $b^2\ll ac$. 
\end{rem}

\subsection{The conformal limit: heuristic description}\label{ss:conf_lim}

In the conformal limit $\sigma\to 0$, the energy $E_\sigma$ in \eqref{eq:Esigma} formally reduces to the Dirichlet energy $D(\m)$,
whose minimizers with prescribed degree $Q(\m)$ are well-known and satisfy $D(\m)=4\pi|Q(\m)|$ (see
 e.g. \cite{lemaire1978jdg}).
More precisely, the pointwise inequality 
\begin{align*}
\big| \m\cdot (\partial_1\m\times\partial_2 \m) \big| \leq \frac{1}{2}|\nabla \m|^2\,,
\end{align*}
implies indeed that
\begin{align}\label{eq:DQ}
D(\m)\geq 4\pi |Q(\m)|,
\end{align}
with equality if and only if $\partial_1 \m\cdot\partial_2 \m = |\partial_1\m|^2-|\partial_2 \m|^2=0$, that is, $\m$ is conformal.
Conformal maps of degree $Q(\m)=-1$ are given by the Möbius group
\begin{align}\label{eq:mobmaps}
\mathcal M
&
=\Big\lbrace
\Phi\Big(\frac{az +b}{cz +d}\Big) \colon a,b,c,d\in\C,\, ad-bc\neq 0
\Big\rbrace\,,
\end{align}
where $\Phi\colon \C\cup\lbrace\infty\rbrace \to\mathbb S^2$ is the inverse stereographic projection defined in \eqref{eq:stereo}.
In the parametrization \eqref{eq:mobmaps}, the determinant $ad-bc\in\C \setminus \{0\}$ can always be fixed, 
and the Möbius group is
 six-dimensional (as a real manifold).

Formally, the constraint $A(\m)<\infty$ forces $m_3\to 0$ at $\infty$.
For small positive $\sigma$,
 one therefore expects minimizers of $E_\sigma$ of degree $-1$
  to be close to conformal maps with $|a|=|c|$ in the parametrization \eqref{eq:mobmaps}.
Another real parameter can be fixed by minimizing the DMI term,
and this leaves a four-real-parameter family of conformal maps.
Three of these parameters come from the translational and corotational invariance of the energy \eqref{eq:Tx0Rphi}.
The last parameter can be interpreted as a scaling parameter $\rho>0$, corresponding to a dilation \eqref{eq:Drho}.
Performing these reductions explicitly, one 
 expects
 that a minimizer $\m_\sigma$ of $E_\sigma$ with $Q(\m_\sigma)=-1$
should
 be close to a Möbius map 
\begin{align*}
\Psi (z)=\Phi\Big( w_*\Big(\frac{z}{\rho}\Big)\Big),\qquad
w_*(z)=i\frac{z-1}{z+1}\,,
\end{align*}
modulo translation \eqref{eq:Tx0} and corotation \eqref{eq:Rphi}.
The expression of $w_*(z)$ features a vortex at $z=1$ and an antivortex at $z=-1$, characteristic of bimeron structures, as described in \cite{bachmann2023prb} 
(see  \cite{Hang-Lin} or \cite[\S~7.2]{Ignat} for a survey about the mathematical analysis of magnetic vortices).

The three energy terms 
have different scaling behaviors \eqref{eq:rescaling},
so
one further expects the scale of concentration $\rho>0$ to be fixed by the competition between these three terms.
However, 
since Möbius maps $\Psi\in\mathcal M$ have infinite anisotropy $A(\Psi)=\infty$,
one cannot simply plug this ansatz into the energy:
identifying that concentration scale $\rho$ is a more subtle task.
For skyrmions, this task was carried out in \cite{bernand2021arma}
by introducing several powerful new tools and ideas.
Here we adapt that strategy to bimerons, 
and obtain that minimizers $\m_\sigma$ of $E_\sigma$ with topological degree
 $Q(\m_\sigma)=-1$ concentrate at scale
 \begin{align*}
 \rho_\sigma =\frac{1+o(1)}{\ln(1/\sigma^2)}
 \quad\text{as }\sigma\to 0\,,
 \end{align*}
 that is, $\m_\sigma$ is close, in a sense to be made precise below, to the orbit of the Möbius map
 \begin{align*}
\Psi_\sigma(z)=\Phi\Big(w_*\Big(\frac z{\rho_\sigma}\Big)\Big),\quad w_*(z)=i\frac{z-1}{z+1}\,,
 \end{align*}
 under the action of translations \eqref{eq:Tx0} and corotations \eqref{eq:Rphi}.

\subsection{Functional framework and precise statement}\label{ss:statement}

For any measurable map $\m\colon\R^2\to\mathbb S^2$ belonging to the homogeneous Sobolev space
\begin{align}\label{eq:homHS2}
\mathcal H(\R^2;\mathbb S^2)
&=\bigg\lbrace
\m\in H^1_{\loc}(\R^2;\mathbb S^2)\colon \int_{\R^2}|\nabla \m|^2\, dx <\infty
\bigg\rbrace\,,
\end{align}
composing  with the  stereographic projection \eqref{eq:stereo} 
gives a map
\begin{align*}
\tilde \m = \m\circ\Phi^{-1} \in H^1(\mathbb S^2;\mathbb S^2)\,,
\end{align*}
whose topological degree is a well-defined integer \cite{brezis1995degree} and 
equal to the topological degree $Q(\m)$ defined in \eqref{eq:Q}.
See Appendix~\ref{a:crit_sob_plane} for more details about these claims.

For all $\sigma>0$ we
 consider 
 the energy $E_\sigma$ defined by  
 \eqref{eq:Esigma} on the set
\begin{align}
\label{eq:W}
\mathcal W
&
=\Big\lbrace
\m\in   \mathcal H(\R^2;\mathbb S^2) \colon  A(\m)<\infty
\Big\rbrace
\\
&
=\Big\lbrace
\m\in   \mathcal H(\R^2;\mathbb S^2) \colon  \int_{\R^2} m_3^2\, dx <\infty
\Big\rbrace\,,
\nonumber
\end{align} 
which the value of the topological degree partitions into the subsets
\begin{align}\label{eq:Wq}
\mathcal W_q =\Big\lbrace \m \in \mathcal W \colon Q(\m)=q\Big\rbrace\,,\quad\text{for }
q\in\Z\,.
\end{align} 
Recall that the reflection
$\m\leadsto -\m$ provides a bijection between $\mathcal W_{q}$ and $\mathcal W_{-q}$, while  preserving the energy \eqref{eq:reflection}.
With these notations, our main result provides precise asymptotics 
for minimizers of $E_\sigma$ on $\mathcal W_{-1}$
in the limit $\sigma\to 0$.

\begin{theorem}\label{t:main}
There exist absolute constants $\sigma_0, C>0$ such that,
for any $\sigma\in (0,\sigma_0]$,
the infimum of $E_\sigma$ over $\mathcal W_{-1}$ is attained, 
 satisfies
\begin{align*}
\min_{\mathcal W_{-1}}E_\sigma 
=4\pi 
-
\frac{2\pi\sigma^2}{\ln\big((1/\sigma^2)\ln^2(1/\sigma^2)\big)}
+\mathcal O\Big(\frac{\sigma^2}{\ln^2(1/\sigma^2)}\Big)
\,,
\end{align*}
and, for any minimizing map $\m_\sigma\in \mathcal W_{-1}$,
there exist
  $\rho_\sigma>0$ and $\alpha_\sigma\in\R$ estimated by
\begin{align*}
&
\big| \ln(1/\sigma^2)\,\rho_\sigma-
1\big| +
|\alpha_\sigma|\leq \frac{C}{\sqrt{\ln(1/\sigma^2)}}
\,,
\end{align*}
and a Möbius map $\Psi_\sigma\in\mathcal M$ characterized by
 \begin{align}
 \label{eq:Mob_orbit}
\mathfrak T_{z_\sigma}\mathfrak R_{\phi_\sigma}\Psi_\sigma(z)
&
=\Phi\Big( w_*\Big(e^{-i\alpha_\sigma}\frac z{\rho_\sigma}\Big)\Big)
\,,
\quad
w_*(z)
=
i\frac{z-1}{z+1}\,,
\end{align}
for some translation 
 $\mathfrak T_{z_\sigma}$ and corotation $\mathfrak R_{\phi_\sigma}$ as in \eqref{eq:Tx0}-\eqref{eq:Rphi}, 
 such that
\begin{align*}
 \int_{\R^2}
\big|\nabla
\big(
\m_\sigma -\Psi_\sigma\big)\big|^2\, dx 
 \leq C\frac{\sigma^2}{\ln  (1/\sigma^2)}\,.
 \end{align*}
\end{theorem}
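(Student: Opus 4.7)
The plan is to follow the strategy of \cite{bernand2021arma}, which proves an analogous result for skyrmions, and adapt it to the bimeron setting. The argument consists of matching upper and lower bounds on $\min_{\mathcal W_{-1}}E_\sigma$, an existence proof via concentration-compactness, and a Lyapunov--Schmidt-type analysis around the Möbius family $\mathcal M$.

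For the upper bound, I would construct a trial map by truncating $\Psi_\rho(z)=\Phi(w_*(z/\rho))$ outside a large ball of radius $R$, so that the anisotropy becomes finite. The Dirichlet part is exactly $4\pi$ by conformality; by \eqref{eq:rescaling} the DMI term contributes $\sigma^2 \rho\,\widetilde H(w_*)$, which can be computed explicitly and is strictly negative in the $Q=-1$ case, while $m_3$ for the bimeron Möbius decays only like $1/|z|$ at infinity, so the truncated anisotropy produces a term of order $\sigma^2 \rho^2 \ln(R/\rho)$. Choosing $R\sim 1/\sigma$ and minimizing over $\rho$ then selects the scale $\rho_\sigma^{-1}\sim \ln(1/\sigma^2)$ and yields the announced expansion of $\min E_\sigma$.

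For the lower bound and existence, I would first use \eqref{eq:DQ} together with Young's inequality on the DMI term (as already sketched in the footnote following \eqref{eq:Esigma}) to deduce that any near-minimizer $\m_\sigma$ satisfies $D(\m_\sigma)\le 4\pi+O(\sigma)$ and $\sigma^2 A(\m_\sigma)\le O(\sigma)$. A quantitative rigidity version of \eqref{eq:DQ}, of the type proved in \cite{bernand2021arma}, then places $\m_\sigma$ $H^1$-close to some $\Psi\in\mathcal M$; combined with $A(\m_\sigma)<\infty$, this selects the four-real-parameter subfamily of $\mathcal M$ with $|a|=|c|$, namely \eqref{eq:Mob_orbit}. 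Concentration-compactness applies because the DMI term prevents the scaling parameter from escaping to $0$ or $\infty$, and translations/corotations are quotiented out by the symmetries \eqref{eq:Tx0Rphi}. Once $\m_\sigma$ is placed in a tubular neighbourhood of the reduced family parametrised by $(\rho,z_0,\phi,\alpha)$, I would Lyapunov--Schmidt decompose $\m_\sigma=\pi_\Psi(\m_\sigma)+\xi$ with $\xi$ orthogonal to $T_\Psi\mathcal M$, use coercivity of the Dirichlet Hessian on this complement (again a consequence of rigidity of \eqref{eq:DQ}) to bound $\|\nabla\xi\|_{L^2}^2\le O(\sigma^2/\ln(1/\sigma^2))$, and minimize an explicitly computed reduced energy $F_\sigma(\rho,\alpha)$ to obtain the asymptotics of $\rho_\sigma,\alpha_\sigma$.

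The main obstacle will be the construction of the reduced energy $F_\sigma$: since $A(\Psi)=+\infty$ for every $\Psi\in\mathcal M$, one cannot plug Möbius maps directly into $E_\sigma$, and the logarithmic divergence must be extracted by integrating the DMI term by parts through the identity of Remark~\ref{r:DMI} and pairing the boundary contributions with a cutoff version of the anisotropy, exactly as was done in \cite{bernand2021arma} for the easy-axis case. Two adaptations specific to the easy-plane model will require extra care: first, the anisotropy only controls $m_3$ rather than the full horizontal part, so it is much less coercive than in the easy-axis case and additional estimates are needed to ensure compactness and to close the Lyapunov--Schmidt loop; second, the bimeron Möbius has a vortex/antivortex structure and the corotation symmetry \eqref{eq:Rphi} introduces the extra angular parameter $\alpha$ that must be tracked throughout the reduction.
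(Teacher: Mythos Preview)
Your overall plan---truncated M\"obius competitors for the upper bound, the stability estimate of \cite{bernand2021arma} to place near-minimizers close to $\mathcal M$, concentration-compactness for existence, and a reduced minimization over the M\"obius parameters---is exactly the paper's strategy. But your description of the crucial step, the sharp lower bound on the anisotropy, contains a genuine gap. You correctly identify the obstacle that $A(\Psi)=+\infty$ for every $\Psi\in\mathcal M$, but your proposed remedy (``integrating the DMI term by parts\ldots and pairing the boundary contributions with a cutoff version of the anisotropy'') is not what \cite{bernand2021arma} actually does, and it is unclear how it would yield a lower bound on $A(\m)$ for an \emph{arbitrary} near-minimizer $\m$. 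The DMI term $\widetilde H$ is \emph{finite} on M\"obius maps (computed explicitly in \S\ref{s:DMIconf}); the logarithmic divergence lives entirely in $A$, and extracting it from below requires two tools you do not mention: the Moser--Trudinger inequality on $\mathbb S^2$ to control $\int_{B_R} v_3^2$ for the perturbation $\bv=\m-\Psi$, and a Fourier-side argument with modified Bessel functions (following \cite[Lemma~6.4]{bernand2021arma}) to produce the sharp leading constant $A(\m)\geq 4\pi\rho^2\ln L-C$. Without these, the lower bound cannot match the upper bound at the stated precision.

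A second, smaller gap is the Lyapunov--Schmidt framing. The paper does \emph{not} project onto $(T_\Psi\mathcal M)^\perp$ or invoke Hessian coercivity; since $E_\sigma(\Psi)=+\infty$, no reduced energy $F_\sigma=E_\sigma\circ\pi_\Psi$ can be defined. Instead, each of $D,\widetilde H,A$ is bounded from below \emph{directly} in terms of the parameters $(\rho,\alpha,\beta,L)$ of the nearest M\"obius map (Lemmas~\ref{l:lowA} and \ref{l:lowH}), and the resulting explicit scalar function is minimized by hand in Proposition~\ref{p:lowE}. This also requires a tailored parametrization of $\mathcal M$ (Lemma~\ref{l:param_mobius}) with an extra angle $\beta$ measuring deviation from the bimeron subfamily, together with an auxiliary bound $A(\m)\gtrsim\rho^2 L^2\sin^2(2\beta)$ to force $\beta\approx 0$; your proposal jumps directly to the four-parameter subfamily without explaining this reduction. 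Finally, the paper needs genuinely new arguments (absent from \cite{bernand2021arma}) both for the DMI lower bound in \S\ref{ss:lowH} and for ruling out vanishing in the concentration-compactness step, precisely because the easy-plane anisotropy does not control $|m'|^2$; you flag this difficulty but do not indicate a mechanism to overcome it.
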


\begin{rem}
A slightly more precise description of the orbit of Möbius maps closest to minimizers of $\mathcal E_\sigma$ with degree $Q=-1$ is given in Proposition~\ref{p:lowE}.
Its proof also makes the contribution of each energy term apparent:
\begin{align*}
D(\m_\sigma)
&
=4\pi  +\mathcal O\Big(\frac{\sigma^2}{\ln^2(1/\sigma^2)}\Big)\,,
\\
H(\m_\sigma)
&
=-2A(\m_\sigma)
= - \frac{4\pi}{\ln\big((1/\sigma^2)\ln^2(1/\sigma^2)\big)}
+\mathcal O\Big(\frac{1}{\ln^2(1/\sigma^2)}\Big)\,.
\end{align*}
Here, the Pohozaev identity $H(\m_\sigma)=-2A(\m_\sigma)$ comes from 
criticality of the minimizer $\m_\sigma$ with respect to scaling.
\end{rem}

\subsection{Comparison with skyrmions}\label{ss:skyrmions}

The main difference between bimerons and skyrmions is that the easy-plane anisotropy $A(\m)$ in the energy \eqref{eq:Esigma} is replaced by an easy-axis anisotropy
\begin{align}
A^{\text{easy-axis}}_1(\m)
&=\int_{\R^2}(1-m_3^2)\, dx\,,
\label{eq:easyaxis}
\\
\text{or}
\quad
A_2^{\text{easy-axis}}(\m)
&=\int_{\R^2}(1-m_3)\, dx\,.
\nonumber
\end{align}
The first version is used in \cite{bernand2021arma}, and the second  in \cite{melcher2014proc,doring2017cvpde}.
Formally, the first enforces only $\m(x)\to\pm\mathbf e_3$ as  $|x|\to\infty$, 
and the second selects the orientation $+\mathbf e_3$.
In practice, minimizers for the first version are obtained in a space which already selects an orientation, see e.g. \cite[\S~2.1]{bernand2021arma}, and the analysis of these two different easy-axis models is extremely similar.
We will not comment on 
additional nonlocal energy terms which are also considered in \cite{bernand2021arma}.

\begin{rem}\label{r:DMI_easyaxis}
Changing the anisotropy  forces to change the DMI term.
Let $m' =(m_1,m_2)$ and $\nabla^\perp =(-\partial_2,\partial_1)$. 
If $D(\m)+A^{\text{easy-axis}}_j(\m)<\infty$ for $j=1$ or $2$,
 the DMI term
\begin{align*}
\widetilde H^{\text{easy-axis}}(\m)=-2\int_{\R^2}m' \cdot\nabla^\perp m_3\, dx\,,
\end{align*} 
is well-defined,
since $|m' |^2 =1-m_3^2$ is controlled by the easy-axis anisotropy.
Similarly to Remark~\ref{r:DMI}, it satisfies
\begin{align*}
\widetilde H^{\text{easy-axis}}(\m)
=
\lim_{k\to\infty}\int_{B_{R_k}} \m\cdot\nabla\times\m\, dx\,,
\end{align*}
for some sequence $R_k\to \infty$.
Under the condition
$D(\m)+A_2^{\text{easy-axis}}(\m)<\infty$
which selects the orientation $+\mathbf e_3$ at $\infty$,
this DMI term can also be rewritten as
 \begin{align}
\widetilde H^{\text{easy-axis}}(\m)
&
=
\int_{\R^2}(\m-\mathbf e_3)\cdot \nabla\times \m\, dx
\nonumber
\\
&
=2\int_{\R^2}(m_3-1)(\partial_1 m_2-\partial_2 m_1)\, dx\,,
\label{eq:DMI_easyaxis}
\end{align}
as used repeatedly in \cite{melcher2014proc,doring2017cvpde,bernand2021arma}.
\end{rem}

The main, obvious difference between 
our easy-plane anisotropy $A(\m)$ and the easy-axis anisotropy $A_1^{\text{easy-axis}}(\m)$ 
is the structure of admissible constant states, 
or, equivalently, 
 admissible far-field limits:
when it exists, the limit
 $\m_\infty =\lim_{|x|\to\infty} \m(x) \in\mathbb S^2$
 must satisfy
\begin{align*}
&
\m_\infty \in \mathbb S^1\times\lbrace 0\rbrace
&&
\text{if }A(\m)<\infty,\\
\text{versus}
\quad
&
\m_\infty \in \lbrace \pm \mathbf e_3\rbrace
&&
\text{if }A_1^{\text{easy-axis}}(\m)<\infty\,.
\end{align*}
We list next several consequences of this difference.

 \medskip
 
\noindent\textbf{Sign of the topological degree.} 
The set $\mathbb S^1\times\lbrace 0\rbrace$ is connected,
while $\lbrace\pm \mathbf e_3\rbrace$ has two  components.
They divide each homotopy class of $\mathbb S^2$-valued maps into two distinct classes of admissible easy-axis maps,
causing a distinction between positive and negative topological degrees in the skyrmion model \cite{melcher2014proc,doring2017cvpde,bernand2021arma}.
This distinction is not present here.

\medskip
\noindent\textbf{Symmetric solutions.}
The corotational symmetry of the energy \eqref{eq:Tx0Rphi},
also valid in the easy-axis model,
 makes it natural
 to look for critical points which are axisymmetric: invariant under the action of corotations \eqref{eq:Rphi},
 that is, $\m(x) =R_{\mathbf e_3,\phi} \m(e^{-i\phi} x)$ for all $\phi\in\R$.
In polar coordinates $z=re^{i\theta}$,
axisymmetric maps are of the form
$\m_{\mathrm{sym}}(re^{i\theta})=R_{\mathbf e_3,\theta}\m(r)$.
This ansatz is compatible with $\m_\infty\in \lbrace\pm \mathbf e_3\rbrace$ in the easy-axis case, and 
axisymmetric skyrmions of degree $Q=-1$ are analysed in \cite{li2018jfa}.
But 
there are no axisymmetric bimerons,
because this ansatz is not compatible 
with $\m_\infty\in\mathbb S^1\times\lbrace 0\rbrace$, 
and more generally with $D(\m)+A(\m)<\infty$.
Indeed, 
 for such a symmetric ansatz,
 we have 
\begin{align*}
&
\int_1^\infty (1-m_3^2)\frac{dr}{r} =\frac{1}{2\pi}\int_{|x|\geq 1}\frac{|\partial_\theta\m_{\mathrm{sym}}|^2}{r^2}\, dx  \leq
 D(\m_{\mathrm{sym}})\,,
\\
\text{and }
&
\int_{1}^\infty m_3^2\frac{dr}{r} \leq \int_1^\infty m_3^2 \, r dr  + \int_{1}^{\infty} m_3^2\frac{dr}{r^3} \leq A(\m_{\mathrm{sym}}) +1\,,
\end{align*}
hence $D(\m_{\mathrm{sym}})+A(\m_{\mathrm{sym}})<\infty$ would imply $\int_1^\infty dr/r <\infty$, a contradiction.

\medskip
\noindent
\textbf{Dimension of the selected Möbius maps.}
In the easy-plane case described by Theorem~\ref{t:main}, 
the selected orbit of Möbius maps (under the action of translations and corotations) is three-dimensional,
while for skyrmions it is two-dimensional.
This is related 
with the previous observation: the skyrmions' orbit is smaller because it contains an axisymmetric map, which stays fixed under the action of corotations \eqref{eq:Rphi}.

\medskip
\noindent
\textbf{Far-field behavior.}
The set $\mathbb S^1\times\lbrace 0\rbrace$ is one-dimensional, while
$\lbrace\pm \mathbf e_3\rbrace$ is discrete.
In that sense, easy-axis anisotropy is much more constraining,
and this is reflected in the far-field behavior of finite-energy configurations, that is, their behavior as $|x|\to +\infty$.
The assumption of finite easy-axis anisotropy $A_j^{\text{easy-axis}}(\m)<\infty$
(for $j=1$ or 2)
implies that $\infty$ is a Lebesgue point of $\m$, 
in the sense that
\begin{align*}
\m_R:=\Xint{-}_{|x|\geq R}\m \, \frac{dx}{(1+|x|^2)^2} \to \pm\mathbf e_3\quad\text{as }R\to\infty\,,
\end{align*} 
see  Corollary~\ref{c:lebesgue_infty}.
Here instead, under the finite easy-plane assumption $A(\m)<\infty$,
we only know that
$\dist(\m_R,\mathbb S^1\times\lbrace 0\rbrace)\to 0$,
and $\m_R$ might fail to have a limit as $R\to\infty$.
Consider for instance $\m(x)=\Phi(e^{i\varphi(x)}w_*(x))$ with $w_*$ as in Theorem~\ref{t:main} and $\varphi(x)=\ln(1+\ln(1+|x|^2))$, 
then $\m\in \mathcal W_{-1}$ 
 but $\infty$ is not a Lebesgue point of $\m$.

\medskip
\noindent
\textbf{Control of the DMI term.}
 As another consequence of the previous point, 
the easy-axis anisotropy provides a more efficient control on the DMI term: indeed, using the expression \eqref{eq:DMI_easyaxis} we see that
\begin{align*}
\big|
\widetilde H^{\text{easy-axis}}(\m)
\big|^2
&
\leq 16 D(\m)  \int_{\R^2}(1-m_3)^2\, dx \,,
\end{align*}
and  the last integral is controlled by 
$A_2^{\text{easy-axis}}(\m)$, 
but, near $\infty$, its integrand is actually much smaller than the integrand $(1-m_3)$ of $A_2^{\text{easy-axis}}(\m)$.
This improved control is used crucially in \cite{melcher2014proc} and \cite{bernand2021arma}, but is 
 absent in our case.

\subsection{Proof ideas}\label{ss:ideas}

As explained above, the proof of Theorem~\ref{t:main} 
relies primarily on adapting  the strategy introduced in \cite{bernand2021arma}
for a similar model with easy-axis anisotropy \eqref{eq:easyaxis}.
The main tool is 
a stability estimate \cite[Theorem~2.4]{bernand2021arma}
(see  \cite{hirsch2022,topping2023} for alternative proofs and
 \cite{rupflin23} for a generalization to higher degrees)
 which implies that minimizers of $E_\sigma$ with degree $Q=-1$ must be close to the Möbius group \eqref{eq:mobmaps}.
This information can then be used to obtain lower bounds on each energy term,
which depend on the closest Möbius map.
Comparing these lower bounds with
energy competitors 
 which are close to the optimal orbit of Möbius maps \eqref{eq:Mob_orbit} then implies
that the closest Möbius map must belong to a neighborhood of that
optimal orbit.

Next we underline the main new elements in our proof, compared to
the analysis performed in \cite{bernand2021arma} (and also \cite{melcher2014proc}).

\begin{itemize}
\item The classical parametrization \eqref{eq:param_classic} of the Möbius group \eqref{eq:mobmaps}, 
was convenient in \cite{bernand2021arma}
to describe the Möbius maps close to skyrmions,
but is not well adapted to the orbit of Möbius maps \eqref{eq:Mob_orbit} close to bimerons.
We provide a new and more adapted parametrization in Lemma~\ref{l:param_mobius}.

\item 
As explained in the last point of \S~\ref{ss:skyrmions}, 
the control on the DMI term $H(\m)$ provided by the easy-plane 
anisotropy $A(\m)$ is less coercive than that
 in the easy-axis case.
This plays a role in two places, 
where we cannot use the arguments in \cite{melcher2014proc} and \cite{bernand2021arma}:
 to obtain 
a sharp lower bound on the DMI term, and to rule out `vanishing' in the proof of existence.
We introduce new arguments to circumvent that lack of efficient control: see \S~\ref{ss:lowH}, and Step~3 in the
proof of Proposition~\ref{p:existence}.
\item
 The upper bound is obtained by a construction which relies on modifying Möbius maps to make their anisotropy $A(\m)$ finite.
Here we use, as in  \cite{doring2017cvpde}, a basic cut-off construction at one scale.
The construction in \cite{bernand2021arma} is more elaborate and cuts tails off in an optimal way with a modified Bessel function.
The two constructions turn out to provide the same accuracy, see Remark~\ref{r:truncation}.
It only affects the explicit bound on the remainder term of order $\sigma^2/\ln^2\sigma$ in the energy expansion.
 \end{itemize}

\subsection{Plan of the article}

The article is organized as follows. In \S~\ref{s:param_mob} we describe our tailored parametrization of the Möbius group.
In \S~\ref{s:DMIconf} we calculate the DMI energy of conformal maps.
In \S~\ref{s:up} we 
describe the construction which provides the energy upper bound.
In \S~\ref{s:low} we prove the lower bound and characterize maps which almost saturate it.
In \S~\ref{s:exist} we use that characterization to prove existence of minimizers for $0<\sigma\ll 1$ and conclude the proof of Theorem~\ref{t:main}.

\subsection*{Acknowledgments}
The authors are supported in part by the ANR projects ANR-22-CE40-0006 and  ANR-21-CE40-
0004, and by LabEx CIMI.

\section{A parametrization of the Möbius group} \label{s:param_mob}

The Möbius group \eqref{eq:mobmaps} can be parametrized by
\begin{align}\label{eq:param_classic}
\m=S\Phi\left( \frac{z-z_0}{\rho} \right),\qquad z_0\in\C, \, \rho>0,\, S\in SO(3).
\end{align}
This parametrization
 was convenient 
for the study of skyrmions in \cite{bernand2021arma}.
It is naturally  expressed in terms of the translation operators $\mathfrak T_{z_0}$ defined in \eqref{eq:Tx0}, 
and of the dilation operators $\mathfrak D_\rho$ defined in \eqref{eq:Drho}.
The corotation operators $\mathfrak R_\phi$ defined in \eqref{eq:Rphi} do not appear, 
which is consistent with the fact that the optimal orbit of Möbius maps closest to skyrmions contains an axisymmetric map, as explained in \S~\ref{ss:skyrmions}.
But in our case we need to keep better track of corotations,
and we choose therefore a different parametrization.

\begin{lem}
\label{l:param_mobius}
The Möbius group can be parametrized by
\begin{align}\label{eq:param_mobius}
\m^{\lbrace z_0,\rho,\phi,\alpha,\beta\rbrace} 
&
=
\mathfrak T_{z_0}
\mathfrak D_\rho
\mathfrak R_\phi 
\m^{[\alpha,\beta]},
\\
& z_0\in\C,\, \rho>0,\,\phi,\alpha,\beta\in\R\,,
\nonumber
\end{align}
where 
\begin{align*}
\m^{[\alpha,\beta]}
&
=R_{\mathbf e_1,2\beta}\Phi(w_*(e^{-i\alpha}z)) =\Phi(w^{[\alpha,\beta]}(z)),
\\
w^{[\alpha,\beta]}(z)
&
=
\frac{\cos( \beta  ) w_* (e^{-i\alpha}z) +i\sin( \beta  )}
{i\sin( \beta  )w_*(e^{-i\alpha}z)+\cos( \beta )},
\quad
w_*(z)=i\frac{z-1}{z+1}\,.
\end{align*}
\end{lem}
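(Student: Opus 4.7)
The plan is to convert the composition in \eqref{eq:param_mobius} into a matrix product in $SL(2,\C)$ and then invoke the Iwasawa decomposition of $PSL(2,\C)$. First, I would identify $\mathcal M$ with $PSL(2,\C)$ through the correspondence $f\leftrightarrow \Phi\circ f$ and translate each operator into matrix operations: direct calculation shows that $\mathfrak T_{z_0}$ acts by right multiplication by $N_{z_0}=\bigl(\begin{smallmatrix}1&-z_0\\0&1\end{smallmatrix}\bigr)$, $\mathfrak D_\rho$ by $\Delta_\rho=\mathrm{diag}(\rho^{-1/2},\rho^{1/2})$, and $\mathfrak R_\phi$ by conjugation with $K_\phi=\mathrm{diag}(e^{i\phi/2},e^{-i\phi/2})$ (the latter because $R_{\mathbf e_3,\phi}\Phi(w)=\Phi(e^{i\phi}w)$). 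In parallel, $\m^{[\alpha,\beta]}$ is identified with the matrix product $T_\beta W_* K_{-\alpha}$, where $T_\beta=\bigl(\begin{smallmatrix}\cos\beta&i\sin\beta\\i\sin\beta&\cos\beta\end{smallmatrix}\bigr)$ and $W_*=\bigl(\begin{smallmatrix}i&-i\\1&1\end{smallmatrix}\bigr)$.

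Assembling these factors and combining $K_{-\alpha}K_\phi^{-1}=K_{-(\alpha+\phi)}$, the map $\m^{\{z_0,\rho,\phi,\alpha,\beta\}}$ corresponds to the matrix $M=R(\phi,\alpha,\beta)\cdot\Delta_\rho\cdot N_{z_0}$, where $R(\phi,\alpha,\beta):=K_\phi T_\beta W_* K_{-(\alpha+\phi)}$. A short check using $(\cos\beta+\sin\beta)^2+(\cos\beta-\sin\beta)^2=2$ shows that $R(\phi,\alpha,\beta)/\sqrt{2i}\in SU(2)$, so $R(\phi,\alpha,\beta)\in PSU(2)$. This is exactly the Iwasawa decomposition form $PSL(2,\C)=K\cdot A\cdot N$ with $K=PSU(2)$, $A=\{\Delta_\rho:\rho>0\}$ and $N=\{N_{z_0}:z_0\in\C\}$; since $\Delta_\rho$ and $N_{z_0}$ trivially sweep out $A$ and $N$, only the surjectivity of $(\phi,\alpha,\beta)\mapsto R(\phi,\alpha,\beta)$ onto $K$ remains.

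For this remaining step I would pass through the isomorphism $PSU(2)\cong SO(3)$: $R(\phi,\alpha,\beta)$ corresponds to the product of rotations $R_{\mathbf e_3,\phi}R_{\mathbf e_1,2\beta}R_{W_*}R_{\mathbf e_3,-(\alpha+\phi)}$, where $R_{W_*}$ is the fixed rotation associated to $w_*$. A direct evaluation $R_{W_*}(\mathbf e_3)=\Phi(w_*(\infty))=\Phi(i)=\mathbf e_2$ yields the commutation $R_{W_*}R_{\mathbf e_3,\theta}=R_{\mathbf e_2,\theta}R_{W_*}$, and the product rewrites as $R_{\mathbf e_3,\phi}R_{\mathbf e_1,2\beta}R_{\mathbf e_2,-(\alpha+\phi)}R_{W_*}$; surjectivity then follows from the classical three-axis (ZXY Tait--Bryan) angle decomposition of $SO(3)$.

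The main obstacle is precisely this surjectivity of the rotation factor onto $PSU(2)$. A constructive alternative that avoids any external decomposition theorem is to invert by directly matching entries: with $p=\cos\beta+\sin\beta$ and $q=\cos\beta-\sin\beta$, the normalized matrix $R(\phi,\alpha,\beta)/\sqrt{2i}$ has $(1,1)$-entry of modulus $|p|/\sqrt 2$ and $(1,2)$-entry of modulus $|q|/\sqrt 2$, so any prescribed $SU(2)$ target determines $\beta\in[-\pi/4,\pi/4]$, after which $\phi$ and $\alpha$ are recovered from the arguments of the two entries, the gimbal-lock cases being handled by direct inspection.
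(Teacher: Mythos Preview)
Your proof is correct and takes a genuinely different, more structural route than the paper. The paper argues by direct construction: given an arbitrary Möbius map $\m$, it first picks $\phi$ so that $R_{\mathbf e_3,-\phi}\m(\infty)$ lies on the great circle $\mathbf e_1^\perp\cap\mathbb S^2$, then $\beta$ so that $R_{\mathbf e_1,-2\beta}$ brings that point to $\mathbf e_2$; the resulting map $\tilde\m=R_{\mathbf e_1,-2\beta}\mathfrak R_{-\phi}\m$ satisfies $\tilde\m(\infty)=\mathbf e_2$, hence $\tilde\m(z)=\Phi\bigl(i\frac{z-a}{z+b}\bigr)$ for some $a,b\in\C$, and the remaining parameters $\rho,z_0,\alpha$ are read off explicitly from $a,b$. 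Your approach instead identifies the parametrization with the product $K\cdot A\cdot N$ in $PSL(2,\C)$, reduces surjectivity to the compact factor, and finishes via a three-axis angle decomposition of $SO(3)$ (or the direct entry-matching you sketch). The paper's argument is fully self-contained and elementary, requiring nothing beyond basic manipulations of Möbius maps and rotations; yours is conceptually cleaner and explains \emph{why} the six parameters suffice (Iwasawa plus Euler angles), at the price of invoking those decompositions as black boxes. Your commutation step $R_{W_*}R_{\mathbf e_3,\theta}=R_{\mathbf e_2,\theta}R_{W_*}$, justified by $R_{W_*}\mathbf e_3=\mathbf e_2$, is a nice touch that mirrors the paper's key reduction to $\tilde\m(\infty)=\mathbf e_2$.
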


\begin{rem}\label{r:walphabetaId}
For $\beta=\pi/4$, we have $w^{[\alpha,\pi/4]}(z)=e^{i(\frac\pi 2 -\alpha)}z$.
\end{rem}

\begin{rem}\label{r:betaw*}
For $\beta\in\frac{\pi}{2}\Z$ we have $w^{[0,\beta]}=w_*$ or $1/w_*$ depending on the value of $\beta$ modulo $\pi/2$, 
but these two maps are on the same orbit generated by corotations: for $\phi=\pi$ we have $\mathfrak R_\pi[ \Phi(w_*)] = \Phi(1/w_*)$.
\end{rem}

\begin{proof}[Proof of Lemma~\ref{l:param_mobius}]
The maps \eqref{eq:param_mobius} are clearly Möbius maps, 
we need to check that all Möbius maps can be obtained this way.

Consider an arbitrary Möbius map $\m$. There exists $\phi\in\R$ such that 
$\mathbf v = R_{\mathbf e_3,-\phi}\m(\infty)$ belongs to $\mathbf e_1 ^\perp\cap\mathbb S^2$, the large circle which contains $\mathbf e_2$ and $\mathbf e_3$.
Then there exists $\beta \in \R$ such that $R_{\mathbf e_1,-2\beta}\mathbf v=\mathbf e_2$, so that 
\begin{align*}
{\tilde \m}=R_{\mathbf e_1,-2\beta}\mathfrak R_{-\phi} \m
\end{align*}
 satisfies
${\tilde \m}(\infty)=\mathbf e_2$.
As a consequence, ${\tilde \m}$ can be written as
\begin{align*}
{\tilde \m}(z)=\Phi\left(i \frac{z-a}{z+b} \right),
\end{align*}
for some $a,b\in \mathbb C$ such that $a+b\neq 0$.
We infer that
\begin{align*}
\m  = \mathfrak R_{\phi} R_{\mathbf e_1,2\sigma}\Phi(\tilde w),\quad \tilde w(z)=i\frac{z-a}{z+b}.
\end{align*}
Using the identities
\begin{align*}
    R_{\mathbf e_1,2\theta}\Phi(w)
    &
    =\Phi\left(\frac{w\cos(\theta) +i\sin(\theta) }{i\, w\sin(\theta) +\cos(\theta) }\right),
    \\
     R_{\mathbf e_3,\theta}\Phi(w)
     &
     =\Phi(e^{i\theta}w)\,,
\end{align*}
this becomes $\m=\Phi(w)$ with
\begin{align*}
w(z)
&
=
e^{i\phi}
\frac{\cos( \beta  ) \tilde w (e^{-i\phi}z) +i\sin( \beta  )}
{i\sin( \beta  )\tilde w(e^{-i\phi}z)+\cos( \beta )}
\,.
\end{align*}
Thus, for any $z_0\in\C$ and $\rho>0$ we have
\begin{align*}
\mathfrak D_{1/\rho}\mathfrak T_{-z_0} \m =\mathfrak R_\phi R_{\mathbf e_1,2\beta}\Phi(\bar w),
\end{align*}
with
\begin{align*}
\bar w(z)
&
=\tilde w (\rho z+e^{-i\phi}z_0)
=i\frac{z-(a-e^{-i\phi}z_0)/\rho}{z+(b+ e^{-i\phi}z_0)/\rho }.
\end{align*}
Choosing
\begin{align*}
\rho =\frac{|a+b|}{2},
\quad
z_0 =e^{i\phi}\frac{a-b}{2},
\end{align*}
we obtain
\begin{align*}
\bar w(z)=i\frac{z-e^{i\alpha}}{z+e^{i\alpha}}
=w_*(e^{-i\alpha}z),\qquad \alpha=\arg(a+b).
\end{align*}
We conclude that
\begin{align*}
\m=\mathfrak T_{z_0}\mathfrak D_\rho \mathfrak R_\phi \m^{[\alpha,\beta]},
\end{align*}
with $\m^{[\alpha,\beta]}(z)=R_{\mathbf e_1,2\beta}\Phi(w_*(e^{-i\alpha}z))$ 
and the parametrization \eqref{eq:param_mobius} is indeed surjective.
\end{proof}

\section{DMI energy of Möbius maps}\label{s:DMIconf}

In
this section we compute
 the DMI energy of 
 Möbius maps
 \begin{align*}
\m(z) &
=\Phi(w^{[\alpha,\beta]}(z))
=R_{\mathbf e_1,2\beta -\pi/2}\Phi(w^{[\alpha,\pi/4]}(z))
\\
&
=R_{\mathbf e_1,2\beta -\pi/2}\Phi(ie^{-i\alpha}z)\,,
\end{align*}
 where $\Phi$ 
 is the stereographic map defined in \eqref{eq:stereo}.
The equalities follow from the definition of 
 $w^{[\alpha,\beta]}$ 
 in Lemma~\ref{l:param_mobius}
 and from Remark~\ref{r:walphabetaId}.
For such map $\m$, 
the integrand of $\tilde H(\m)$ 
is not absolutely
 integrable,
but we will see that the integral
\begin{align}
    \widetilde H(\m;B_R)=2\int_{B_R}m_3
     (\partial_1 m_2-\partial_2 m_1)\, dx\,,
\end{align}
admits a limit as $R\to\infty$,
thus providing a meaningful definition of $\widetilde H(\m)$.
Using 
 the notation $z=x+i y\in \mathbb{C}\approx \mathbb{R}^2$,
we find
\begin{align*}
    m_1&=\frac{2 (x \sin (\alpha)-y \cos (\alpha))}{x^2+y^2+1},
    \\
    m_2&=\frac{\cos (2 \beta)
   \left(x^2+y^2-1\right)}{x^2+y^2+1}+\frac{2 \left(x \cos (\alpha) +y \sin (\alpha) \right) \sin (2 \beta)}{x^2+y^2+1},
   \\
   m_3&=\frac{ \sin (2 \beta)
   \left(x^2+y^2-1\right)}{x^2+y^2+1}-\frac{2 \left(x \cos (\alpha)+y \sin (\alpha)\right) \cos (2 \beta)}{x^2+y^2+1}
   \,,
\end{align*}
and
\begin{align*}
2m_3(\partial_1 m_2-\partial_2 m_1)
&
=
\frac{4 g(x,y)\, h(x,y)}{(1+x^2+y^2)^3}\,,
\end{align*}
where
\begin{align*}
    g(x,y)&=-\sin (2 \beta) +\left(x^2+y^2\right)\sin(2\beta) 
    \\
    &\quad -2 x \cos (\alpha)  \cos (2 \beta) -2 y \sin (\alpha)\cos (2 \beta),
    \\
    h(x,y)&=  \cos (\alpha) 
    \big(1+\sin (2 \beta)
    +(1-\sin(2\beta))( x^2-y^2)
\big)
   \\
   &\quad +2 x \cos (2 \beta)+2xy  \sin (\alpha) \left(1- \sin (2 \beta)\right)\,.
\end{align*}
Integrating on $B_R$,
and using that a function $f(x,y)$ 
has zero integral if it satisfies one of the antisymmetry
properties
 $f(x,-y)=-f(x,y)$ or $f(-x,y)=-f(x,y)$ or $f(y,x)=-f(x,y)$, we are left with
 \begin{align*}
 \frac 14
 \widetilde H(\m;B_R)
 &
 =\cos(\alpha)\sin(2\beta)(1+\sin(2\beta))
I_1 
 \\
 &
\quad
 -4\cos\alpha\cos^2(2\beta)
I_2
,
\end{align*}
where
\begin{align*}
I_1 &=
 \int_{B_R}\frac{x^2+y^2 -1}{(x^2+y^2+1)^3}\,dxdy
 =-\pi\frac{R^2}{(1+R^2)^2}\,,
\\
I_2
&
=\int_{B_R}\frac{x^2}{(x^2+y^2+1)^3}dxdy
=\frac\pi 4 \frac{R^4}{(1+R^2)^2}\,,
 \end{align*}
and therefore
\begin{align*}
\frac{(1+R^2)^2}{4\pi}\widetilde H(\m;B_R)
&
= -\cos\alpha\cos^2(2\beta) R^4
\\
&
\quad
-\cos(\alpha)\sin(2\beta)(1+\sin(2\beta))R^2\,.
\end{align*}
Taking the limit as $R\to\infty$, we deduce
\begin{align}\label{eq:DMImalphabeta}
\widetilde H(\m^{[\alpha,\beta]})
= \lim_{R\to\infty} \widetilde H(&\m^{[\alpha,\beta]};B_R)
=
-4\pi\cos(\alpha) \cos^2(2\beta)\,,
\end{align}
and we also obtain the estimate
\begin{align}\label{eq:DMImalphabetaR}
\big| 
\widetilde H(\m^{[\alpha,\beta]}) -\widetilde H(\m^{[\alpha,\beta]};B_R)
\big|
 \leq \frac{C}{R^2}\,,
\end{align}
for all $R\geq 2$, where $C>0$ is an absolute constant.

\section{Energy upper bound}\label{s:up}

In this section we prove the upper bound on the minimal energy, 
by estimating the energy of explicit
 competitors.

\begin{prop}\label{p:up}
The infimum of the energy $E_\sigma$ defined in \eqref{eq:Esigma} over the space $\mathcal W_{-1}$ defined in \eqref{eq:Wq} is bounded by
\begin{align}
\inf_{\mathcal W_{-1}}E_\sigma
&
\leq
4\pi 
-\frac{\pi\sigma^2}{\ln\big(\sigma^{-1}\ln(1/\sigma)\big)}
+C\frac{\sigma^2}{\ln^2\sigma}
\,,
\label{eq:up}
\end{align}
for some absolute constant $C>0$.
\end{prop}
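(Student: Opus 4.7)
The plan is to test the energy on a cutoff of a dilated Möbius map and optimize the truncation scale against the dilation scale $\rho$. I start from $\Psi_\rho(z) = \Phi(w_*(z/\rho))$ for $\rho > 0$. By conformality, $D(\Psi_\rho) = 4\pi$ and $Q(\Psi_\rho) = -1$; combining \eqref{eq:rescaling} with \eqref{eq:DMImalphabeta} at $\alpha = \beta = 0$ (the Möbius configuration minimizing the DMI) gives $\widetilde H(\Psi_\rho) = -4\pi\rho$. The obstruction to using $\Psi_\rho$ directly is that $m_3 = O(\rho/|z|)$ at infinity, so $A(\Psi_\rho) = +\infty$.

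To cure this, following the one-scale cutoff construction of \cite{doring2017cvpde}, for $L \gg \rho$ I set $\tilde{\m}_{\rho,L}(z) = \Phi(u(z))$ with $u(z) = \chi(|z|/L) w_*(z/\rho) + (1-\chi(|z|/L))\, i$, where $\chi \in C^\infty([0,\infty); [0,1])$ equals $1$ on $[0,1]$ and $0$ on $[2,\infty)$. This map coincides with $\Psi_\rho$ on $B_L$, is constant equal to $\mathbf{e}_2 = \Phi(i)$ outside $B_{2L}$, and is homotopic to $\Psi_\rho$ as soon as $L/\rho$ is large enough, so $Q(\tilde{\m}_{\rho,L}) = -1$ and $\tilde{\m}_{\rho,L} \in \mathcal{W}_{-1}$. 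Each energy term is then handled separately: direct integration of $m_3^2$ over $B_L$ gives $A(\tilde{\m}_{\rho,L}) = 4\pi\rho^2 \ln(L/\rho) + O(\rho^2)$, with the annular contribution absorbed since $|u - i| = O(\rho/L)$ on $A_L := B_{2L} \setminus B_L$; conformality together with $|\nabla \Psi_\rho| = O(\rho/|z|^2)$ at infinity yields $D(\tilde{\m}_{\rho,L}) = 4\pi + O(\rho^2/L^2)$; and the rescaled form of \eqref{eq:DMImalphabetaR} applied to $\widetilde H(\Psi_\rho; B_L)$, together with the crude bound $|\tilde{m}_3|\, |\nabla\tilde{\m}| = O(\rho^2/L^3)$ on $A_L$, produces $\widetilde H(\tilde{\m}_{\rho,L}) = -4\pi\rho + O(\rho^2/L)$. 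Putting these together,
\[ E_\sigma(\tilde{\m}_{\rho,L}) \leq 4\pi + 4\pi\sigma^2\big(\rho^2 \ln(L/\rho) - \rho\big) + O\big(\rho^2/L^2 + \sigma^2 \rho^2\big). \]

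To conclude, the leading correction $4\pi\sigma^2(\rho^2\ln(L/\rho) - \rho)$ is minimized over $\rho$ by $\rho(2\ln(L/\rho) - 1) = 1$, which delivers a saved energy of order $-\pi\sigma^2/\ln(L/\rho)$. Picking $L = \rho\,\sigma^{-1}\ln(1/\sigma)$, so that $\ln(L/\rho) = \ln(\sigma^{-1}\ln(1/\sigma))$, produces $\rho \sim 1/\ln(1/\sigma)$ and $\rho/L \sim \sigma/\ln(1/\sigma)$, so every error term fits into $O(\sigma^2/\ln^2\sigma)$, matching the bound \eqref{eq:up}. The main subtle point is the DMI bookkeeping: since the integrand is not absolutely integrable for $\Psi_\rho$, one must pass through the truncated quantity $\widetilde H(\cdot; B_R)$ and use \eqref{eq:DMImalphabetaR}; checking that the cutoff preserves the degree is then routine by homotopy invariance once $L/\rho$ is large.
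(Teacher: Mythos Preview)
Your argument is correct and essentially identical to the paper's: you build the same one-scale cutoff competitor $\Phi(\chi\,w_*+(1-\chi)i)$, compute the three energy contributions, and optimize. The only cosmetic difference is that you parametrize the truncation radius in physical coordinates (so your $L$ is the paper's $\rho L$), which simply replaces $\ln L$ by $\ln(L/\rho)$ throughout and leads to the same choice $L/\rho\sim\sigma^{-1}\ln(1/\sigma)$, $\rho\sim 1/\ln(1/\sigma)$.
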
 
 
\begin{proof}
[Proof of Proposition~\ref{p:up}]
As explained in the introduction,
 one would like to use Möbius maps as competitors,
but they have infinite anisotropy energy $A(\m)$,
so we first need to modify them. 
We introduce a truncation parameter $L>0$ and define
$w_*^L\colon\C\to \C$ as
 \begin{align}\label{eq:w*L}
w_*^L(z)
&=\chi(|z|/ L)w_*(z) +(1-\chi(|z|/ L))i
\\
&
=i -  \chi(|z|/L) \frac{2i}{z+1}
\,,
\nonumber
\end{align}
where $w_*(z)=i(z-1)/(z+1)$ as in Lemma~\ref{l:param_mobius}
and $\chi$ is a smooth cut-off function satisfying
\begin{align*}
\mathbf 1_{r\leq 1}\leq\chi(r)\leq\mathbf 1_{r\leq 2},
\quad 0\geq \chi'\geq -2\,.
\nonumber
\end{align*} 
In analogy with the parametrization of Möbius maps given in Lemma~\ref{l:param_mobius},
this
truncated function $w_*^L$ can be used to define general truncated Möbius maps
\begin{align*}
\m_L^{\lbrace z_0,\rho,\phi,\alpha,\beta\rbrace} 
&
=
\mathfrak T_{z_0}
\mathfrak D_\rho
\mathfrak R_\phi 
\m_L^{[\alpha,\beta]},
\quad z_0\in\C,\, \rho >0,\,\phi,\sigma,\alpha,\beta\in\R\,,
\\
\m_L^{[\alpha,\beta]}
&
=R_{\mathbf e_1,2\beta}\Phi(w_*^L(e^{-i\alpha}z))\,.
\end{align*}
Recall that here $\Phi$ is the stereographic map defined in \eqref{eq:stereo}.
Note that the invariances of the Dirichlet energy ensure
\begin{align*}
&
\int_{\R^2} 
|\nabla \m_L^{\lbrace z_0,\rho,\phi,\alpha,\beta\rbrace}-\nabla \m^{\lbrace z_0,\rho,\phi,\alpha,\beta\rbrace}|^2\, dx
\\
&
=
\int_{\R^2} 
|\nabla [\Phi(w_*)-\Phi(w_*^L)]|^2\, dz
\\
&
=\int_{|z|\geq L}
|\nabla [\Phi(w_*)-\Phi(w_*^L)]|^2\, dz
\leq \frac{C}{L^2}\,,
\end{align*}
so for large enough $L$ these modified maps $\m_L$ must satisfy $Q(\m_L)=-1$,
and belong to the admissible set $\mathcal W_{-1}$ defined in \eqref{eq:Wq}.

If $\beta\neq 0$ modulo $\pi/2$, these maps have infinite anisotropy $A(\m)$,
and all values of $\beta =0$ modulo $\pi/2$ give the same energy, 
so we assume $\beta=0$.
The invariances \eqref{eq:Tx0Rphi} allow us to assume without loss of generality $z_0=0$ and $\phi=0$.
We are therefore left with three free parameters and denote
\begin{align*}
\m_{\alpha,\rho,L}(z)=\mathfrak D_\rho \m_L^{[\alpha,0]}(z)=\Phi\Big( w_*^L\Big(e^{-i\alpha}\frac{z}{\rho}
\Big)
\Big)\,.
\end{align*}
Using the invariances of each energy term,
 the properties
\begin{align*}
&
(\m_{\alpha,1,L})^2_3 =|\nabla m_{\alpha,1,L}|^2=0\quad\text{in }\R^2\setminus B_{2L}\,,
\\
&
(\m_{\alpha,1,L})^2_3  \leq \frac{C}{L^2}
\quad
\text{and }
|\nabla m_{\alpha,1,L}|^2
\leq \frac{C}{L^4}
\quad\text{in }B_{2L}\setminus B_L\,,
\end{align*}
and
 $w_*^L=w_*$ in $B_L$,
 we find
\begin{align*}
D(\m_{\alpha,\rho,L} )
&
=
D(\m_{0,1,L})
=\int_{B_L} \big|\nabla [\Phi(w_*) ]\big|^2\, dz
+\mathcal O(1/L^2),
\\
\widetilde H(\m_{\alpha,\rho,L} )
&
=
\rho \widetilde H(\m_{\alpha,1,L})
=
\rho\widetilde H(\m^{[\alpha,0]};B_L)
+ \mathcal O(\rho/L)\,,
\\
A(\m_{\alpha,\rho,L} )
&
=\rho^2 A(\m_{0,1,L} )
=\rho^2\int_{B_L}\Phi_3^2(w_*)\, dz   + \mathcal O(\rho^2)\,.
\end{align*}
The expressions involving $\Phi(w_*)$  and $\m^{[\alpha,0]}$ in the right-hand sides
can be calculated explicitly.
For the Dirichlet energy we know that the limit is $4\pi$ as $L\to\infty$,
and the decay $|\nabla[\Phi(w_*)]|^2\leq C/|z|^4$ as $|z|\to\infty$
 gives an error of order $1/L^2$.
For the DMI energy we use 
\eqref{eq:DMImalphabeta} and
\eqref{eq:DMImalphabetaR}.
And for the anisotropy term we have 
\begin{align*}
&
\int_{B_L}\Phi_3^2(w_*)\, dz 
=\int_{B_L} \frac{(|w_*|^2-1)^2}{(|w_*|^2+1)^2}\, dz
\\
&
=\int_{B_L}
\frac{(|z-1|^2-|z+1|)^2}{(|z-1|^2+|z+1|)^2}\, dz
=4\int_0^{2\pi}\int_0^L
 \frac{\cos^2\theta\, r^2}{(1+r^2)^2}\, rdr\, d\theta
 \\
 &
 =
 2\pi\int_0^{L^2}\frac{t}{(1+t)^2}\, dt
= 2\pi \ln(1+L^2) -\frac{2\pi L^2}{1+L^2}
\end{align*}
Thus we find
\begin{align*}
D(\m_{\alpha,\rho,L} )
&
=4\pi +\mathcal O(1/L^2),
\\
\widetilde H(\m_{\alpha,\rho,L} )
&
=
-4\pi \rho\, \cos(\alpha) + \mathcal O(\rho/ L)\,,
\\
A(\m_{\alpha,\rho,L} )
&
=4\pi \rho^2 \ln L + \mathcal O(\rho^2)\,,
\end{align*}
and deduce
\begin{align}
&
E_\sigma(
    \m_{\alpha,\rho,L})
\leq 4\pi +\mathcal{E}_\sigma(\alpha,\rho,L)
 + \mathcal O(\sigma^2\rho/ L + \sigma^2\rho^2),
 \label{eq:up_reduced}
\\
&
\text{where }
   \mathcal{E}_\sigma(\alpha,\rho,L)
    =\frac {C_1}{L^2} +4\pi\sigma^2
\left(
\rho^2\ln L - \rho\cos(\alpha)
\right)\,,
\nonumber
\end{align}
for some absolute constant $C_1 > 0$. 
Minimizing $\mathcal E_\sigma$  over $\alpha$ gives $\alpha=0$
and
\begin{align*}
\mathcal{E}_{\sigma}(0,\rho,L) =\frac{C_1}{L^2} +4\pi\sigma^2
\left(
\rho^2\ln L - \rho
\right).
\end{align*}
Minimizing  over $\rho$ gives $\rho_L=1/(2\ln L)$ and 
\begin{align*}
\mathcal{E}_{\sigma}(0,\rho_L,L) = \frac{C_1}{L^2} - \frac{\pi\sigma^2}{\ln L}\,.
\end{align*}
Minimizing  over $L$ leads to, at main order for $\sigma\to 0$,
\begin{align*}
L_\sigma
&
=\sqrt{\frac{2C_1}{\pi}}\frac{\ln(1/\sigma)}{\sigma}\,,
\\
\rho_\sigma
&
=\rho_{L_\sigma}
 =\frac{1}{2\ln(1/\sigma)} \Big(1+\mathcal O\Big(\frac{\ln\ln(1/\sigma)}{\ln(1/\sigma)}\Big)\Big)\,,
\end{align*}
and
\begin{align}
\mathcal{E}_{\sigma}(0,\rho_{\sigma},L_\sigma) =
 -\frac{\pi\sigma^2}{\ln\big(\sigma^{-1}\ln(1/\sigma)\big)}
+\mathcal O\left(\frac{\sigma^2}{\ln^2\sigma}\right)
\,. \nonumber
\end{align}
Plugging this into \eqref{eq:up_reduced},
we deduce the upper bound \eqref{eq:up}.
\end{proof}

\begin{rem}\label{r:truncation}
The constant $C_1$ 
in the above proof
depends on the truncation \eqref{eq:w*L} 
that we used to
transform Möbius maps into maps with finite anisotropy energy $A(\m)$.
In \cite{bernand2021arma}, 
this modification of Möbius maps is done in a much more refined way, 
in order
to obtain an optimal constant $C_1$.
However, we see in the above proof
that the precise value of $C_1$ 
does not affect the upper bound 
at main order when $\sigma\to 0$.
That refinement is therefore superfluous here,
and it seems to us that the results in \cite{bernand2021arma} could also be obtained without that refinement.
\end{rem}

\section{Energy lower bound}\label{s:low}

In this section we consider
a map
$\m$ 
in the space $\mathcal W_{-1}$ defined in \eqref{eq:Wq}, that is, 
$\m\in \mathcal H(\R^2;\mathbb S^2)$
 such that
$A(\m)<\infty$ and $Q(\m)=-1$, and prove a sharp lower bound on its energy,
following quite closely the strategy in \cite{bernand2021arma},
with some necessary adaptations.

The most important tool in that strategy
is a stability estimate for the Möbius group \eqref{eq:mobmaps} as minimizers of the Dirichlet energy $D(\m)$, proved in \cite[Theorem~2.4]{bernand2021arma}.
That theorem provides an absolute constant $c_*>0$
(which could be made explicit) such that,
 for any $\m\in H_c^1(\R^2;\mathbb S^2)$, there exists a Möbius map $\Psi\in\mathcal M$
satisfying
    \begin{align}\label{stability-hm}
\int_{\R^2}\big|\nabla (\m-\Psi )\big|^2dx\le c_* \Big(\int_{\R^2}|\nabla\m|^2dx-8\pi\Big).
    \end{align} 
Moreover, it is apparent from the alternative proofs of this result in \cite{hirsch2022,topping2023} that the map $\Psi$ can be chosen so that 
\begin{align*}
u = \m\circ\Psi^{-1}-\mathrm{id}_{\mathbb S^2}=(\m-\Psi)\circ\Psi^{-1}\,,
\end{align*} 
has zero average on $\mathbb S^2$. 
Applying the  Moser-Trudinger inequality  on $\mathbb S^2$
\cite{moser1971iumj} and changing variables, this implies
\begin{align*}
&
\int_{\R^2} |\nabla \Psi|^2 
\exp\bigg( \frac{|\m-\Psi|^2}{\int_{\R^2}|\nabla (\m-\Psi)|^2\,dx}\bigg)
\, dx
\nonumber
\\
&
=2\int_{\mathbb S^2}  \exp\bigg(
\frac{ |u|^2}
{\int_{\mathbb S^2} |\nabla u |^2
d\mathcal H^2}
\bigg)\, d\mathcal H^2 
\leq c_{\mathrm{MT}}\,,
\end{align*}
for some explicit absolute constant $c_{\mathrm{MT}}>0$.
Defining
\begin{align}\label{eq:L}
    & L=\left(\int_{\mathbb{R}^2}|\nabla\m|^2dx-8\pi\right)^{-1/2}\,,
\end{align}
and setting
\begin{align*}
\bv
=\m-\Psi\,,
\qquad
\Psi 
=\m^{\lbrace z_0,\rho,\phi,\alpha,\beta\rbrace}\,,
\end{align*}
for some
 $z_0\in\C$, $\rho>0$ and $\phi,\alpha,\beta\in\R$
 (according to the parametrization of $\mathcal M$ 
 provided by Lemma~\ref{l:param_mobius})
 we have therefore
\begin{align}
\label{eq:stabMob}
&
 \int_{\R^2}\left|\nabla\bv\right|^2\, dx \leq \frac {c_*}{L^2},
\\
\text{and }
&
\int_{\R^2}
\exp\bigg(
\frac{v_3^2}{\int_{\R^2}|\nabla v_3|^2}
\bigg)
|\nabla \Psi|^2
\, dx  
\leq 
c_{\mathrm{MT}}\,
.
\label{eq:mosertrudinger}
\end{align}
In the next two subsections we use these stability estimates to 
provide lower bounds on the anisotropy and DMI energy in terms of $\Psi$ and $L$.

\subsection{Lower bounds for the anisotropy term}\label{ss:lowA}

In this section we prove two lower bounds on the anisotropy term
\begin{align*}
A(\m)=\int_{\R^2}m_3^2\, dx\,.
\end{align*}
The first lower bound serves to show that the angle $\beta$ must be close to $0$ modulo $\pi/2$, 
which then makes the second lower bound quite sharp.
The proofs are natural modifications of the
 two lower bounds in \cite[Lemma~6.1 \& Lemma~6.4]{bernand2021arma}.

\begin{lem}\label{l:lowA}
There exist $L_0,C>0$ depending on $c_{\mathrm{MT}}$ and $c_*$ such that, if $L$ defined in \eqref{eq:L} satisfies $L\geq L_0$, then
\begin{align}
&
\frac{A(\m)}{\rho^2}
\geq
\frac{1}{C} \sin^2(2 \beta) L^2\,,
\label{eq:lowAsin}
\\
\text{and}\quad
&
\frac{A(\m)}{\rho^2}
\geq 
4\pi \cos^2(2\beta)\ln L -C\,,
\label{eq:lowAcos}
\end{align}
where $\rho>0$ and $\beta\in\R$ are such that \eqref{eq:stabMob} and \eqref{eq:mosertrudinger} 
are satisfied.
\end{lem}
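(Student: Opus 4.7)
The plan is to first use the invariances to reduce to the case $\rho=1$, $z_0=0$, $\phi=0$. Under the action of $\mathfrak R_{-\phi}\mathfrak D_{1/\rho}\mathfrak T_{-z_0}$, the Dirichlet energy and the topological degree, hence $L$ and the stability estimates \eqref{eq:stabMob}-\eqref{eq:mosertrudinger}, are invariant, while $A(\m)$ transforms to $A(\m)/\rho^2$ by \eqref{eq:rescaling}. It therefore suffices to prove the two bounds under the normalization $\Psi=\m^{[\alpha,\beta]}$. From the formulas of Section~\ref{s:DMIconf},
\begin{align*}
\Psi_3(z)=\sin(2\beta)\frac{|z|^2-1}{|z|^2+1}-\frac{2\cos(2\beta)(x\cos\alpha+y\sin\alpha)}{|z|^2+1},
\end{align*}
so I would write $\Psi_3=\sin(2\beta)+\widehat\Psi_3$ with $|\widehat\Psi_3(z)|\leq C/(1+|z|)$; direct polar integration gives, as $R\to\infty$,
\begin{align*}
\int_{B_R}\Psi_3^2\,dx=\pi\sin^2(2\beta)R^2+\bigl(4\pi\cos^2(2\beta)-8\pi\sin^2(2\beta)\bigr)\ln R+\mathcal O(1).
\end{align*}

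For the first bound I would work on the annulus $A=\lbrace c_0 L\leq|z|\leq 2c_0 L\rbrace$ for a small absolute $c_0>0$, on which $|\Psi_3|\geq\tfrac12|\sin(2\beta)|$ once $L\geq L_0$. Poincar\'e on $A$ together with \eqref{eq:stabMob} gives $\|v_3-\bar v_3^A\|_{L^2(A)}^2\leq C$. If $|\bar v_3^A+\sin(2\beta)|\geq\tfrac12|\sin(2\beta)|$, Chebyshev yields $|m_3|\geq\tfrac14|\sin(2\beta)|$ on a subset of $A$ of measure at least $cL^2$, hence $A(\m)\geq c'\sin^2(2\beta)L^2$. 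Otherwise $\bar v_3^A$ is close to $-\sin(2\beta)$; the Cauchy-Schwarz estimate on radial averages $u(t)=\dashint_{\partial B_t}v_3\,d\mathcal H^1$,
\begin{align*}
|u(L)-u(1)|^2\leq\frac{\ln L}{2\pi}\int_{\R^2}|\nabla v_3|^2\,dx\leq\frac{c_*\ln L}{2\pi L^2},
\end{align*}
combined with \eqref{eq:mosertrudinger} localized on a unit ball where $|\nabla\Psi|^2$ is bounded below, forces $|\sin(2\beta)|\leq C\sqrt{\ln L}/L$, whence $\sin^2(2\beta)L^2\leq C\ln L$, and the bound is covered by the second one (in which regime $|\cos(2\beta)|\geq 1/2$).

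For the second bound I would use $m_3^2\geq\Psi_3^2+2\Psi_3 v_3$ (dropping the nonnegative $v_3^2$) and integrate on $B_L$. In view of the first bound, only the regime $|\sin(2\beta)|\leq 1/\sqrt{\ln L}$ is nontrivial, where the asymptotic above reads $\int_{B_L}\Psi_3^2=4\pi\cos^2(2\beta)\ln L+\mathcal O(1)$. The cross term $\int_{B_L}\Psi_3 v_3$ splits as $\sin(2\beta)\int_{B_L}v_3+\int_{B_L}\widehat\Psi_3 v_3$. The first piece is controlled by writing $\int v_3=\int m_3-\int\Psi_3$ and using $|\sin(2\beta)|\int |m_3|\leq\tfrac12 A(\m)+C$ together with the explicit value of $\int\Psi_3$; the second is handled by Cauchy-Schwarz and the bound $\|v_3\|_{L^2(B_L)}\leq C\sqrt{\ln L}$, which follows from Poincar\'e with mean $\bar v_3^{B_L}=\mathcal O(\sqrt{\ln L}/L)$ obtained as in the first bound, together with $\|\widehat\Psi_3\|_{L^2(B_L)}^2=4\pi\cos^2(2\beta)\ln L+\mathcal O(1)$. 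A careful optimization then yields $A(\m)\geq 4\pi\cos^2(2\beta)\ln L-C$.

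The main obstacle I anticipate is this sharp control of the cross term: the far-field value $\sin(2\beta)$ of $\Psi_3$ does not decay at infinity, and the easy-plane anisotropy, unlike the easy-axis one in \cite{bernand2021arma}, does not pin $v_3$ to a fixed background state. Preserving the sharp factor $4\pi$ in front of $\cos^2(2\beta)\ln L$ hinges on a delicate balance between Moser-Trudinger near the vortices, Poincar\'e on dyadic annuli, and the radial-average Cauchy-Schwarz, with all constants depending only on $c_*$ and $c_{\mathrm{MT}}$.
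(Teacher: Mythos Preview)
Your reduction via the invariances is the same as the paper's (the paper also sets $\alpha=\pi/2$, so that $\m^{[\pi/2,\beta]}=R_{\mathbf e_1,2\beta-\pi/2}\Phi$ by Remark~\ref{r:walphabetaId}, and $m_3=-\cos(2\beta)\Phi_2+\sin(2\beta)\Phi_3+v_3$).

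For the first bound \eqref{eq:lowAsin}, your annulus/Poincar\'e/case-analysis route is different from the paper but workable. The paper argues more directly: squaring $m_3=-\cos(2\beta)\Phi_2+\sin(2\beta)\Phi_3+v_3$ with $(a+b)^2\geq a^2/2-2b^2$ on $B_R$, computing $\int_{B_R}(\sin(2\beta)\Phi_3-\cos(2\beta)\Phi_2)^2$ explicitly, and bounding $\int_{B_R}v_3^2$ via the Moser--Trudinger inequality \eqref{eq:mosertrudinger} and the elementary inequality $xy\leq e^x+y\ln(y/e)$, which yields
\[
\int_{B_R}v_3^2\,dx\leq c_{\mathrm{MT}}\frac{c_*}{L^2}+16\pi c_*\frac{R^2\ln R}{L^2}\,.
\]
Choosing $R=L/(4\sqrt{2c_*})$ then gives \eqref{eq:lowAsin} without any case analysis.

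The second bound \eqref{eq:lowAcos} is where your proposal has a genuine gap. Your strategy is to drop $v_3^2$ and control the cross term $2\int_{B_L}\Psi_3 v_3$ by Cauchy--Schwarz against $\|v_3\|_{L^2(B_L)}$. But the best available control is $\|v_3\|_{L^2(B_R)}^2\lesssim R^2\ln R/L^2$, while $\|\widehat\Psi_3\|_{L^2(B_R)}^2\sim\ln R$, so the $\widehat\Psi_3$-piece of the cross term is of order $R\ln R/L$; optimizing in $R$ forces $R\sim L/\ln L$ and leaves a deficit $4\pi\cos^2(2\beta)\ln\ln L$ in the main term, not $O(1)$. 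Your handling of the constant piece $2\sin(2\beta)\int_{B_L}v_3$ via $\int v_3=\int m_3-\int\Psi_3$ is worse still: $\int_{B_L}\Psi_3\approx\pi\sin(2\beta)L^2$ and $|\sin(2\beta)|\int_{B_L}|m_3|\leq\frac12 A(\m)+\frac{\pi}{2}\sin^2(2\beta)L^2$ by Young, so terms of size $\sin^2(2\beta)L^2$ survive on the right-hand side. In the regime $|\sin(2\beta)|\sim 1/\sqrt{\ln L}$ (which you allow) this is of order $L^2/\ln L$ and overwhelms the target $4\pi\cos^2(2\beta)\ln L$. The difficulty you anticipate in your last paragraph is real, and Poincar\'e plus radial-average control is not enough to resolve it.

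The paper circumvents this entirely by working on the Fourier side. Differentiating $m_3=-\cos(2\beta)\Phi_2+\sin(2\beta)\Phi_3+v_3$ and using that $\re\mathcal F(\partial_\ell\Phi_3)=\im\mathcal F(\partial_\ell\Phi_2)=0$ (since $\Phi_3$ and $\Phi_2/x_2$ are radial), the $\sin(2\beta)\Phi_3$ contribution decouples. Then the algebraic identity
\[
\int|f|^2-\int\frac{\mu|\xi|^2}{1+\mu|\xi|^2}|g|^2+\mu\int|\xi|^2|f-g|^2\geq 0
\]
applied with $f=\xi_\ell\im\mathcal F m_3/|\xi|$ and $g=\cos(2\beta)\mathcal F(\partial_\ell\Phi_2)/|\xi|$ produces a lower bound whose error term is $\mu\int|\nabla v_3|^2\leq \mu c_*/L^2$---controlled by the \emph{gradient} of $v_3$ rather than $v_3$ itself, which is exactly what gains the missing logarithm. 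The main term is computed via $\mathcal F(\nabla\Phi_2)=-4\pi K_1(|\xi|)\xi_2\xi/|\xi|$ and the asymptotics $K_1(r)\sim 1/r$ as $r\to 0$, giving $2\pi\ln(1+\mu)$; choosing $\mu\sim L^2$ yields the sharp $4\pi\cos^2(2\beta)\ln L-C$.
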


\begin{rem}\label{r:lowA}
From \eqref{eq:lowAsin} we infer
\begin{align*}
\cos^2(2\beta)
=1-\sin^2(2\beta)\geq 1-\frac{C}{L^2}\frac{A(\m)}{\rho^2}\,.
\end{align*}
Plugging this into \eqref{eq:lowAcos}, we deduce
\begin{align*}
\bigg(1+4\pi C\frac{\ln L}{L^2}\bigg)
\frac{A(\m)}{\rho^2}
\geq 4\pi\ln L - C\,,
\end{align*}
and therefore
\begin{align}\label{eq:lowA}
\frac{A(\m)}{\rho^2}
\geq
4\pi\ln L - C\,,
\end{align}
for all $L\geq L_0$ and a possibly larger constant $C>0$.
\end{rem}

\begin{proof}[Proof of Lemma~\ref{l:lowA}]
Using the invariances, we assume without loss of generality that
$\rho=1$, $z_0=0$, $\phi=0$ and $\alpha=\pi/2$,
so by \eqref{eq:stabMob} we have
\begin{align*}
&
\m =\m^{[\pi/2,\beta]}+\bv,\qquad
\int_{\R^2}|\nabla\bv|^2\, dx \leq \frac{c_*}{L^2}\,.
\end{align*}
Recalling Remark~\ref{r:walphabetaId}, we have
\begin{align*}
\m^{[\pi/2,\beta]}=R_{\mathbf e_1,2\beta -\pi/2}\Phi\,,
\end{align*}
and we deduce that the third component of $\m$ is given by
\begin{align}\label{eq:m3Phiv3}
m_3
&
=
-\cos(2\beta)\Phi_2 +\sin(2\beta)\Phi_3 +v_3\,.
\end{align}
From this identity, the proofs of the first and second lower bound \eqref{eq:lowAsin} and \eqref{eq:lowAcos} follow different strategies.
For the first, we fix $R\geq 1$, square \eqref{eq:m3Phiv3},
use the elementary inequality $(a+b)^2\geq a^2/2 - 2b^2$
and integrate on $B_R$, which gives
\begin{align*}
\int_{B_R} m_3^2\, dx
&
\geq \frac 12 \int_{B_R}\big(\sin(2\beta)\Phi_3-\cos(2\beta)\Phi_2)^2\, dx 
\\
&
\quad
- 2\int_{B_R}v_3^2\, dx\,.
\end{align*}
Using polar coordinates $x=re^{i\theta}$ and the explicit expression \eqref{eq:stereo} of the stereographic map $\Phi$ we can explicitly calculate the first integral in the right-hand side,
\begin{align*}
&
\int_{B_R}\big(\sin(2\beta)\Phi_3-\cos(2\beta)\Phi_2)^2\, dx
\\
&
=2\pi\sin^2(2\beta)\int_0^R
\frac{(r^2-1)^2}{(r^2+1)^2}\, r dr
+4\pi\cos^2(2\beta)\int_0^R\!\frac{r^2}{(r^2+1)^2}\,r dr
\\
&
=\pi\sin^2(2\beta) \Big( R^2 -6\ln(1+R^2)+\frac{6}{1+R^2}\Big)
\\
&
\quad
+2\pi
\Big(\ln(1+R^2)+\frac{1}{1+R^2}\Big)
\\
&
\geq
\frac \pi 2 \sin^2(2\beta) R^2 + 2\pi\ln R\,,
\end{align*}
if $R\geq R_0$ for a large enough absolute constant $R_0\geq 1$.
Next we estimate the integral of $v_3^2$, 
relying as in \cite[Lemma~6.1]{bernand2021arma} on the Moser-Trudinger inequality \eqref{eq:mosertrudinger}.
First we apply the inequality
$xy\leq e^x +y\ln (y/e)$ 
with
$x=v_3^2/\int|\nabla v_3|^2 dx$ and $y=|\nabla \Phi|^{-2}$, 
to write
\begin{align*}
&
\frac{\int_{B_R}v_3^2\, dx}
{\int_{\R^2}|\nabla v_3|^2\, dx}
=
\int_{B_R}
\frac{v_3^2}{\int_{\R^2}|\nabla v_3|^2\, dx}
|\nabla\Phi|^{-2}  |\nabla\Phi|^2\, dx
\\
&
\leq
\int_{B_R} 
\exp
\bigg(
\frac{v_3^2}{\int_{\R^2}|\nabla v_3|^2\, dx}
\bigg)|\nabla\Phi|^2\, dx
+
\int_{B_R}
\ln\bigg(
\frac{1}{e\,|\nabla\Phi|^2}
\bigg)
\, dx.
\end{align*}
Noting that 
$|\nabla\Phi|^2=8/(1+|x|^2)^2=|\nabla \m^{[\pi/2,\beta]}|^2$ and 
recalling the Moser-Trudinger inequality \eqref{eq:mosertrudinger} where
$\Psi=\m^{[\pi/2,\beta]}$, we deduce
\begin{align*}
&
\frac{\int_{B_R}v_3^2\, dx}
{\int_{\R^2}|\nabla v_3|^2\, dx}
\leq
c_{\mathrm{MT}}
+16\pi R^2
\ln R\,,
\end{align*}
and combining this with the stability estimate \eqref{eq:stabMob}
gives
\begin{align}\label{eq:estimv3L2}
&
\int_{B_R}v_3^2\, dx
\leq
c_{\mathrm{MT}}\frac{c_*}{L^2}
+16\pi c_* \frac{ R^2}{L^2}
\ln R\,.
\end{align}
Gathering the above inequalities, we infer
\begin{align*}
\int_{\R^2} m_3^2\, dx
&
\geq 
\frac \pi 4 \sin^2(2\beta) R^2 + \pi\ln R
- 2
c_{\mathrm{MT}}\frac{c_*}{L^2}
\\
&
\quad
-16\pi c_* \frac{ R^2}{L^2}
\ln R\,.
\end{align*}
Choosing $R=L/(4 \sqrt{ 2c_*})$, this becomes
\begin{align*}
\int_{\R^2} m_3^2\, dx
&
\geq 
\frac{\pi}{2^{7} c_*} 
\sin^2(2\beta) L^2 + \frac\pi 2 \ln L
- 2
c_{\mathrm{MT}}\frac{c_*}{L^2} -\frac\pi 2 \ln (4\sqrt {2c_*})
\\
&
\geq \frac{\pi}{2^{7}  c_*} 
\sin^2(2\beta) L^2\,,
\end{align*}
provided $L\geq L_0$ for a large enough $L_0\geq 1$,
and proves the first lower bound \eqref{eq:lowAsin}.

The proof of the second lower bound \eqref{eq:lowAcos} follows the strategy of \cite[Lemma~6.4]{bernand2021arma} 
relying on the Fourier transform
\begin{align*}
\mathcal F\varphi(\xi) =
\int_{\R^2}e^{-ix\cdot\xi}\varphi(x)\, dx\,.
\end{align*}
Differentiating the identity \eqref{eq:m3Phiv3} and taking Fourier transforms we have, for $\ell=1,2$,
\begin{align*}
i\xi_\ell \mathcal F m_3
&
=-\cos(2\beta)\mathcal F(\partial_\ell \Phi_2) +\sin(2\beta)\mathcal F(\partial_\ell\Phi_3) +\mathcal F(\partial_\ell v_3)\,.
\end{align*}
Since $\Phi_3$ and $\Phi_2/x_2$ are radial and real-valued,
a direct  calculation using polar coordinates shows that
\begin{align*}
\re \mathcal F(\partial_\ell\Phi_3)=\im\mathcal F(\partial_\ell\Phi_2)=0\,,
\end{align*}
and we deduce
\begin{align*}
\frac{\xi_\ell}{|\xi|} \im \mathcal F m_3 = \cos(2\beta)\frac{\mathcal F(\partial_\ell\Phi_2)}{|\xi|} -\frac{\re \mathcal F(\partial_\ell v_3)}{|\xi|}\,.
\end{align*}
We fix $\mu\geq 0$, to be chosen later.
Applying the identity
\begin{align*}
&
\int_{\R^2}|f|^2\, d\xi
-
\int_{\R^2}
\frac{\mu |\xi|^2}{1+\mu |\xi|^2} |g|^2\, d\xi
+
\mu\int_{\R^2}|\xi|^2 |f-g|^2\, d\xi
\nonumber
\\
&
=\int_{\R^2}(1+\mu |\xi|^2)
\Big| f-
\frac{\mu |\xi|^2}{1+\mu |\xi|^2}g
\Big|^2\, d\xi
\geq 0
\,
,
\end{align*}
valid for any $f\in L^2(\R^2;(1+|\xi|^2)d\xi)$ and $g\in L^2(\R^2;|\xi|^2d\xi)$,
to $f=\xi_\ell\im \mathcal F m_3/|\xi| $ and $g=
\cos(2\beta)  \mathcal F(\partial_\ell\Phi_2)/|\xi| $,
and summing over $\ell=1,2$ we obtain
\begin{align*}
\int_{\R^2}(\im \mathcal F m_3)^2\, d\xi
&
\geq
\cos^2(2\beta)
\int_{\R^2}
\frac{\mu |\xi|^2}{1+\mu |\xi|^2}\frac{|\mathcal F(\nabla \Phi_2)|^2}{|\xi|^2}d\xi
\\
&
\quad
- \mu \int_{\R^2}|\mathcal F(\nabla v_3)|^2\, d\xi\,.
\end{align*}
Using Plancherel's identity
\begin{align*}
4\pi^2\int_{\R^2}|\varphi|^2\, dx  =\int_{\R^2}|\mathcal F\varphi|^2\, d\xi\,,
\end{align*}
this implies
\begin{align}\label{eq:lowAsin1}
\int_{\R^2}m_3^2\, dx
&
\geq 
\cos^2(2\beta)
\int_{\R^2}
\frac{\mu |\xi|^2}{1+\mu |\xi|^2}\frac{|\mathcal F(\nabla \Phi_2)|^2}{4\pi^2|\xi|^2}d\xi
\nonumber
\\
&\quad
- \mu \int_{\R^2}|\nabla v_3|^2\, dx\,.
\end{align}
As in \cite{bernand2021arma}, 
the first integral in the right-hand side can be explicitly calculated.
It is shown in \cite[Lemma~A.5]{bernand2021arma} that
\begin{align*}
\mathcal F(\nabla \Phi_2)
&
=-4\pi K_1(|\xi|)\xi_2 \frac{\xi}{|\xi|}\,,
\end{align*}
where $K_1$ is a modified Bessel function \cite[\S~9.6]{abramowitz1972handbook}.
Thus we have
\begin{align*}
&
\int_{\R^2}
\frac{\mu |\xi|^2}{1+\mu |\xi|^2}\frac{|\mathcal F(\nabla \Phi_2)|^2}{4\pi^2|\xi|^2}d\xi
=4 \pi \int_0^\infty \frac{\mu r^2}{1+\mu r^2} K_1(r)^2\, rdr\,,
\end{align*}
and, using the known asymptotics of Bessel functions \cite[\S~9.6.11]{abramowitz1972handbook},
\begin{align*}
K_1(r)=
\frac 1r +\mathcal O(r\ln r)
&\quad\text{as }r\to 0\,,
\end{align*}
 we infer
\begin{align*}
&
\int_0^\infty \frac{\mu r^2}{1+\mu r^2} K_1(r)^2\, rdr
\geq 
\int_0^1 \frac{\mu r^2}{1+\mu r^2} K_1(r)^2\, rdr
\\
&\geq
\int_0^1\frac{\mu r}{1+\mu r^2}\, dr
- c_1
\int_0^1 r |\ln r| \, dr\,
\end{align*}
for some absolute constant $c_1>0$,
and therefore
\begin{align*}
&
\int_{\R^2}
\frac{\mu |\xi|^2}{1+\mu |\xi|^2}\frac{|\mathcal F(\nabla \Phi_2)|^2}{4\pi^2|\xi|^2}d\xi
\\
&
\geq
4\pi\int_0^1\frac{\mu r}{1+\mu r^2}\, dr -c_1
=2\pi\ln (1+\mu)-c_1\,.
\end{align*}
Coming back to the lower bound \eqref{eq:lowAsin1} on $\int m_3^2\, dx$
and using also the stability estimate \eqref{eq:stabMob}, this gives
\begin{align*}
\int_{\R^2}m_3^2\, dx
\geq 
2\pi \cos^2(2\beta)\ln (1+\mu) -c_1 -\frac{c_*\mu}{L^2}\,.
\end{align*}
Choosing $\mu=c_1 L^2/c_*$ we infer
\begin{align*}
\int_{\R^2}m_3^2\, dx
\geq 
4\pi \cos^2(2\beta) \ln L -2c_1 - |\ln(c_1/c_*)|\,,
\end{align*}
which proves the second lower bound \eqref{eq:lowAsin}.
\end{proof}

\subsection{Lower bound for the DMI term}\label{ss:lowH}

In this section we prove a lower bound on the DMI term
\begin{align*}
\widetilde H(\m)=
2\int_{\R^2}m_3(\partial_1 m_2-\partial_2 m_1)\, dx\,.
\end{align*}
Similar to Step~2 of \cite[Lemma~6.5]{bernand2021arma},
that lower bound is in term of the DMI energy of the Möbius map $\Psi$ 
and a small error term. 
Here the error term is not as good as in \cite{bernand2021arma},
this is due to the different form of our DMI term,
and the fact that the components of the stereographic 
map $\Phi$ given by \eqref{eq:stereo} have different integrability properties: 
$(\Phi_3+1)^2$ is integrable,  but $\Phi_1^2$ and $\Phi_2^2$ are not.
This forces us to use a slightly more involved argument to control the error term.

\begin{lem}\label{l:lowH}
There exist $L_0,C>0$ depending on $c_{\mathrm{MT}}$ and $c_*$ such that, if $L$ defined in \eqref{eq:L} satisfies $L\geq L_0$, then
\begin{align*}
\frac{\widetilde H(\m)}{\rho}
&
\geq
-4\pi\cos(\alpha) \cos^2(2\beta)
- C\sqrt{\frac{\ln L}{L}} - \frac{C}{\sqrt L}\sqrt{\frac {A(\m)}{\rho^2}}\,,
\end{align*}
where $\rho>0$ and $\alpha,\beta\in\R$ are such that \eqref{eq:stabMob} and \eqref{eq:mosertrudinger} are satisfied.
\end{lem}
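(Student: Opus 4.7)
The plan is to reduce the estimate to the computation of $\widetilde H(\Psi;B_R)$ already performed in Section~\ref{s:DMIconf}, and to carefully control the remainder using both the stability estimate \eqref{eq:stabMob} and the Moser--Trudinger bound \eqref{eq:mosertrudinger}. Since the quantities $\widetilde H(\m)/\rho$, $A(\m)/\rho^2$, $L$, $\alpha$, $\beta$, and the norms in \eqref{eq:stabMob}--\eqref{eq:mosertrudinger} are all invariant under the action $\m \mapsto \mathfrak R_{-\phi}\mathfrak T_{-z_0}\mathfrak D_{1/\rho}\m$, I would first normalize to $z_0=0$, $\phi=0$, $\rho=1$, so that $\Psi = \m^{[\alpha,\beta]}$ and $|\nabla\Psi|^2 = 8/(1+|z|^2)^2$.

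Next I pick a cutoff radius $R \geq 2$ to be optimized later, and split
\begin{align*}
\widetilde H(\m)
= \widetilde H(\Psi;B_R)
+ \big[\widetilde H(\m;B_R) - \widetilde H(\Psi;B_R)\big]
+ \widetilde H(\m;B_R^c).
\end{align*}
The first piece is controlled by \eqref{eq:DMImalphabeta}--\eqref{eq:DMImalphabetaR}, which give $\widetilde H(\Psi;B_R) = -4\pi\cos(\alpha)\cos^2(2\beta) + \mathcal O(1/R^2)$. For the middle piece, substituting $m_i = \Psi_i + v_i$ and expanding produces
\begin{align*}
\widetilde H(\m;B_R)-\widetilde H(\Psi;B_R)
= 2\int_{B_R}\! v_3(\partial_1 m_2 - \partial_2 m_1)\, dx
+ 2\int_{B_R}\! \Psi_3(\partial_1 v_2 - \partial_2 v_1)\, dx,
\end{align*}
which I would bound by Cauchy--Schwarz. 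Since the Moser--Trudinger argument leading to \eqref{eq:estimv3L2} only relies on the identity $|\nabla\Psi|^2 = 8/(1+|z|^2)^2$, it applies verbatim here and yields $\|v_3\|_{L^2(B_R)} \leq C R\sqrt{\ln R}/L$. Combined with $\|\nabla\m\|_{L^2}\leq C$ and $\|\nabla\bv\|_{L^2} \leq C/L$, together with the triangle inequality $\|\Psi_3\|_{L^2(B_R)} \leq \sqrt{A(\m)} + \|v_3\|_{L^2(B_R)}$, the middle piece is then bounded by $C R\sqrt{\ln R}/L + C\sqrt{A(\m)}/L$.

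For the tail, Cauchy--Schwarz gives $|\widetilde H(\m;B_R^c)| \leq 2\sqrt{A(\m)}\,\|\nabla\m\|_{L^2(B_R^c)}$, and the explicit computation $\|\nabla\Psi\|_{L^2(B_R^c)}^2 = 8\pi/(1+R^2)$ combined with \eqref{eq:stabMob} produces $\|\nabla\m\|_{L^2(B_R^c)} \leq C/R + C/L$. Gathering everything, the dominant errors are $R\sqrt{\ln R}/L$ from the middle piece and $\sqrt{A(\m)}/R$ from the tail, and balancing these by choosing $R = \sqrt L$ yields errors of order $\sqrt{\ln L/L}$ and $\sqrt{A(\m)}/\sqrt L$, matching the claimed bound.

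The main obstacle, as emphasized at the end of \S~\ref{ss:skyrmions}, is precisely this tail term: because the easy-plane anisotropy furnishes only $\|m_3\|_{L^2}^2 = A(\m)$ without any pointwise smallness of $m_3$ at infinity, the crude estimate $|\widetilde H(\m;B_R^c)| \leq 2\sqrt{A(\m)}/R$ cannot be substantially sharpened, forcing the suboptimal choice $R = \sqrt L$. This is in turn responsible for the $\sqrt{\ln L/L}$ loss relative to the finer bounds available in the easy-axis setting of \cite{melcher2014proc,bernand2021arma}, where the control $(1-m_3)^2 \ll (1-m_3)$ in the tail gives a much more efficient estimate on the DMI term.
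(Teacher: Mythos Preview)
Your argument is correct and follows the same approach as the paper: normalize to $\rho=1$, compare to $\widetilde H(\Psi;B_{\sqrt L})$ via \eqref{eq:DMImalphabeta}--\eqref{eq:DMImalphabetaR}, and control the remainder by Cauchy--Schwarz together with the Moser--Trudinger estimate \eqref{eq:estimv3L2} on $\|v_3\|_{L^2(B_R)}$ and the explicit decay of $\nabla\Psi$, with the choice $R=\sqrt L$. The only cosmetic difference is that the paper first extracts the \emph{global} term $2\int_{\R^2} m_3(\partial_1 v_2-\partial_2 v_1)$ (bounded directly by $\sqrt{A(\m)}\,\|\nabla\bv\|_{L^2}$) and then splits the remaining integral $\int m_3(\partial_1\Psi_2-\partial_2\Psi_1)$ at radius $\sqrt L$, which sidesteps your triangle-inequality estimate $\|\Psi_3\|_{L^2(B_R)}\le \sqrt{A(\m)}+\|v_3\|_{L^2(B_R)}$; the resulting error terms are identical.
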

\begin{proof}[Proof of Lemma~\ref{l:lowH}]
Using the invariances, we assume without loss of generality that
$\rho=1$, $z_0=0$ and $\phi=0$. Taking
Remark~\ref{r:walphabetaId} into account,
we are therefore left with
\begin{align*}
\m =\Psi +\bv,
\quad
 \Psi &=\m^{[\alpha,\beta]}=R_{\mathbf e_1,\beta-\pi/2}\widetilde\Phi\,,
\quad
\widetilde\Phi(z)
=\Phi(e^{-i\alpha}iz)\,,
\end{align*}
where $\Phi$ is the stereographic map defined in \eqref{eq:stereo},
and $\bv$ satisfies \eqref{eq:stabMob} and \eqref{eq:mosertrudinger}.
The integrand of $\widetilde H(\m)$ 
 satisfies the identities
\begin{align*}
&
m_3(\partial_1m_2-\partial_2 m_1)
-m_3 (\partial_1 v_2-\partial_2 v_1)
\nonumber
\\
&
=\Psi_3(\partial_1\Psi_2-\partial_2\Psi_1)
+
v_3(\partial_1\Psi_2 -\partial_2\Psi_1)
\nonumber
\\
&
=m_3 (\partial_1\Psi_2 -\partial_2\Psi_1)\,,
\end{align*}
from which we infer
\begin{align*}
&
\widetilde H(\m)
 -2\int_{\R^2}m_3 (\partial_1 v_2-\partial_2 v_1)\, dx
\\
&
=\widetilde H(\Psi;B_{\sqrt L}) +2\int_{B_{\sqrt L}}v_3(\partial_1\Psi_2-\partial_2\Psi_1)\, dx
\\
&\quad
+2\int_{|x|\geq \sqrt L} m_3(\partial_1\Psi_2-\partial_2\Psi_1)\, dx\,.
\end{align*}
This implies
\begin{align*}
&
\frac12 \left( \widetilde H(\m)
-
\widetilde H(\Psi;B_{\sqrt L}) \right)
\\
&
=
\int_{\R^2}m_3 (\partial_1 v_2-\partial_2 v_1)\, dx
+
\int_{B_{\sqrt L}}v_3(\partial_1\Psi_2-\partial_2\Psi_1)\, dx
\\
&\quad
+\int_{|x|\geq \sqrt L} m_3(\partial_1\Psi_2-\partial_2\Psi_1)\, dx\,,
\end{align*}
and, using the Cauchy-Schwarz inequality,
\begin{align*}
&
\frac 1 8 \Big(\widetilde H(\m)
-
\widetilde H(\Psi;B_{\sqrt L})\Big)^2
\\
&
\leq
A(\m) \int_{\R^2}|\nabla \bv|^2\,dx
+\int_{B_{\sqrt L}}
v_3^2\, dx
\int_{\R^2}|\nabla\Psi|^2\,dx
\\
&
\quad
+
A(\m)\int_{|x|\geq\sqrt L}|\nabla\Psi|^2\, dx\,.
\end{align*}
Using the fact that
$|\nabla\Psi|^2=|\nabla\Phi|^2=8/(1+|x|^2)^2$,
 the stability estimate \eqref{eq:stabMob} on $\int|\nabla\bv|^2dx$,
and the estimate \eqref{eq:estimv3L2} on integrals of $v_3^2$, this implies
\begin{align*}
&
\frac 14\Big(\widetilde H(\m)
-
\widetilde H(\Psi;B_{\sqrt L})\Big)^2
\\
&
\leq
A(\m) \frac{c_*+8\pi}{L}
+8\pi\Big( 
c_{\mathrm{MT}}\frac{c_*}{L} + 8\pi c_* \frac{\ln L}{L}
\Big)
\end{align*}
Finally, recalling the explicit expression 
\eqref{eq:DMImalphabeta} of $\widetilde H(\Psi)$ 
and its error \eqref{eq:DMImalphabetaR} from $\widetilde H(\Psi;B_{\sqrt L})$,
we have
\begin{align*}
\widetilde H(\Psi;B_{\sqrt L})
&
\geq -4\pi\cos(\alpha) \cos^2(2\beta)-\frac{C}{L} \,,
\end{align*}
and combining this with the previous estimate 
gives the conclusion.
\end{proof}

\subsection{Lower bound for the full energy}\label{ss:lowE}

In this section we combine the lower bounds of Lemma~\ref{l:lowA} and Lemma~\ref{l:lowH}
with elementary 
calculations 
to deduce a sharp
energy lower bound
and characterize the case of near equality.

\begin{prop}\label{p:lowE}
There exists $C_0>0$ 
and $\sigma_0\in (0,1/4]$
 depending explicitly on $c_{\mathrm{MT}}$ and $c_*$ such that, for any map $\m\in \mathcal W_{-1}$ defined in
\eqref{eq:Wq}
and 
$0<\sigma <\sigma_0$, 
the energy $E_\sigma(\m)$ defined in \eqref{eq:Esigma} is bounded below by
\begin{align}\label{eq:lowE}
E_\sigma(\m)\geq 4\pi 
-\frac{\pi\sigma^2}{\ln\big(\sigma^{-1}\ln(1/\sigma) \big)} 
- C_0\frac{\sigma^2}{\ln^2\sigma}\,.
\end{align}
Moreover, if  $\m=\m_\sigma$ saturates that lower bound, in the sense that
\begin{align}\label{eq:lowsatmsigma}
E_\sigma(\m_\sigma)\leq 4\pi 
-\frac{\pi\sigma^2}{\ln\big(\sigma^{-1}\ln(1/\sigma) \big)} 
+ K\frac{\sigma^2}{\ln^2\sigma}\,,
\end{align}
for some $K \geq C_0$, then there exists a constant $C(K)>0$ depending explicitly on $K$  and a Möbius map
$\Psi = m^{\lbrace z_0,\rho,\phi,\alpha,\beta \rbrace}$
 as in \eqref{eq:param_mobius}
such that
\begin{align*}
\frac{1}{C(K)}
\frac{\sigma^2}{\ln^2 \sigma}
\leq
\int_{\R^2}|\nabla \m_\sigma -\nabla\Psi|^2\, dx \leq C(K)
\frac{\sigma^2}{\ln^2\sigma}\,,
\end{align*}
and the parameters $\rho>0$, $\alpha,\beta\in\R$ satisfy
\begin{align*}
&
\bigg|
\rho-\frac{1}{2\ln(1/\sigma)}\bigg|
\leq \frac{C(K)}{\ln^{\frac 32}(1/\sigma)}\,,
\quad
|\alpha |
\leq
\frac{C(K)}{\sqrt{\ln(1/\sigma)}}\,,
\\
&
\text{and}
\quad
|\beta| \leq \frac{\sigma}{\sqrt{\ln (1/\sigma)}}\,.
\end{align*}
\end{prop}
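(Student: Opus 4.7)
The plan is to reduce the lower bound \eqref{eq:lowE} to a two-variable optimization in $L$ and $\rho$. Combining the exact identity $D(\m)=4\pi+1/(2L^2)$ from the definition \eqref{eq:L}, the anisotropy bound $A(\m)/\rho^2\geq 4\pi\ln L - C$ from Remark~\ref{r:lowA}, and the DMI bound of Lemma~\ref{l:lowH} together with $\cos\alpha\cos^2(2\beta)\leq 1$, I obtain
\begin{align*}
E_\sigma(\m) \geq 4\pi + \frac{1}{2L^2} + 4\pi\sigma^2\rho^2 \ln L - 4\pi\sigma^2\rho - R_\sigma(\m),
\end{align*}
where $R_\sigma(\m)=\mathcal O(\sigma^2\rho^2 + \sigma^2\rho\sqrt{\ln L/L} + \sigma^2\sqrt{A(\m)}/\sqrt{L})$ is at most $\mathcal O(\sigma^2/\ln^2(1/\sigma))$ once the expected scalings are in hand. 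I then optimize the right-hand side: in $\rho$, the critical value $\rho^*(L)=1/(2\ln L)$ reduces the expression to $f(L)=1/(2L^2)-\pi\sigma^2/\ln L$; in $L$, the equation $\ln^2L/L^2=\pi\sigma^2$ gives $L^*$ with $\ln L^*=\ln(\sigma^{-1}\ln(1/\sigma))+\mathcal O(1)$. Substituting produces the announced leading term $-\pi\sigma^2/\ln(\sigma^{-1}\ln(1/\sigma))$.

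For the characterization of near-minimizers satisfying \eqref{eq:lowsatmsigma}, I would work through the chain of inequalities above and extract quantitative constraints step by step. The $\rho$-minimization has large Hessian eigenvalue $\sim 8\pi\sigma^2\ln L$, so an energy deficit $\leq K\sigma^2/\ln^2(1/\sigma)$ forces $|\rho-\rho^*(L)|\lesssim\sqrt K/\ln^{3/2}(1/\sigma)$; combining with $|\ln L-\ln(1/\sigma)|=\mathcal O(\ln\ln(1/\sigma))$ yields the stated estimate on $\rho$. The deficit incurred by bounding $\cos\alpha\cos^2(2\beta)\leq 1$ equals $\pi\sigma^2(1-\cos^2\alpha\cos^4(2\beta))/\ln L$ up to lower order, giving $|\alpha|\lesssim 1/\sqrt{\ln(1/\sigma)}$; the stronger bound on $\beta$ instead follows from the first lower bound \eqref{eq:lowAsin} of Lemma~\ref{l:lowA}, which combined with the a priori estimate $A(\m_\sigma)/\rho^2\lesssim \ln(1/\sigma)$ (itself read off the energy) forces $\sin^2(2\beta)\lesssim \ln(1/\sigma)/L^2\sim \sigma^2/\ln(1/\sigma)$. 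For the $L$-direction, the Hessian is degenerate, so rather than Taylor expansion I compare $f(L)$ to $f(L^*)$ directly along displacements $L=e^k L^*$, showing that $k$ stays bounded in terms of $K$, hence $L\asymp L^*$ and $1/L^2\asymp\sigma^2/\ln^2(1/\sigma)$.

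Finally, the two-sided estimate on $\int|\nabla\m_\sigma-\nabla\Psi|^2\,dx$ follows from this comparability. The upper bound is a direct consequence of the stability inequality \eqref{eq:stabMob}: $\int|\nabla(\m_\sigma-\Psi)|^2\,dx\leq c_*/L^2\leq C(K)\sigma^2/\ln^2(1/\sigma)$. For the matching lower bound, the key observation exploits the harmonicity of the Möbius map $\Psi$, $-\Delta\Psi=|\nabla\Psi|^2\Psi$: integration by parts, together with $|\m_\sigma|=|\Psi|=1$ and the polarization $\m_\sigma\cdot\Psi=1-|\m_\sigma-\Psi|^2/2$, yields the identity
\begin{align*}
\int_{\R^2}|\nabla(\m_\sigma-\Psi)|^2\,dx = \frac{1}{L^2}+\int_{\R^2}|\m_\sigma-\Psi|^2|\nabla\Psi|^2\,dx \geq \frac{1}{L^2},
\end{align*}
so the lower bound $1/L^2\gtrsim\sigma^2/\ln^2(1/\sigma)$ just established yields the claim. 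The main obstacle, in my view, is controlling the $L$-direction of the optimization, where the Hessian is nearly degenerate ($\sim\sigma^4/\ln^4(1/\sigma)$) so Taylor arguments fail and one must compare energies directly along exponential displacements; a secondary difficulty is keeping all error terms within the $\mathcal O(\sigma^2/\ln^2(1/\sigma))$ budget, since the leading negative term itself only beats that budget by a factor of $\ln(1/\sigma)$.
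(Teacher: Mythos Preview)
Your outline follows the same strategy as the paper and is correct in its broad strokes, but there is a real gap in the error control. You assert that $R_\sigma(\m)=\mathcal O(\sigma^2\rho^2+\sigma^2\rho\sqrt{\ln L/L}+\sigma^2\sqrt{A(\m)}/\sqrt L)$ is $\mathcal O(\sigma^2/\ln^2(1/\sigma))$ ``once the expected scalings are in hand,'' but for the lower bound \eqref{eq:lowE}, which must hold for \emph{every} $\m\in\mathcal W_{-1}$, you have no a~priori control on $\rho$ or $A(\m)$, so as written the argument is circular. The paper breaks the circle in two steps: first it reduces to $E_\sigma(\m)\leq 4\pi$ (otherwise \eqref{eq:lowE} is trivial) and uses the crude bound $|\widetilde H(\m)|\leq\tfrac12 A(\m)+8D(\m)$ to get $A(\m)\leq 64\pi$ and $L\geq 1/(4\sigma)$; second, a first pass through the chain (with only these rough bounds, and using $4\pi\sigma^2\rho\leq 2\pi\sigma^2\rho^2+4\pi\sigma^2$) yields $\rho\leq 8/\ln(1/\sigma)$, after which the $C\sigma^2\rho^2$ error is legitimately $\mathcal O(\sigma^2/\ln^2\sigma)$. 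You need to spell this bootstrap out.

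Two smaller points. For the $L$-direction, your exponential-displacement idea works, but the paper avoids the degenerate Hessian entirely via the exact identity (with $t=1/L$, $t_*=2\sqrt\pi\sigma\rho$)
\[
f_\sigma(\rho,t)-f_\sigma(\rho,t_*)=\tfrac12(t-t_*)^2+4\pi\sigma^2\rho^2\,g_*(t_*/t),\qquad g_*(x)=\ln x-1+\tfrac1x\geq 0,
\]
so that near-saturation forces $g_*(2\sqrt\pi L\sigma\rho)\leq C(K)$ and hence $1/C(K)\leq L\sigma\rho\leq C(K)$ directly. Finally, your formula for the $\alpha$-deficit is off: the cost of replacing $\cos\alpha\cos^2(2\beta)$ by $1$ in the DMI bound is $4\pi\sigma^2\rho\,(1-\cos\alpha\cos^2(2\beta))$, linear rather than the squared expression you wrote; the conclusion $|\alpha|\lesssim 1/\sqrt{\ln(1/\sigma)}$ is unaffected.
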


\begin{proof}[Proof of Proposition~\ref{p:lowE}]
We first note, for $0<\sigma<1/4$, the basic lower bound
\begin{align}\label{lower}
    E_{\sigma}(\m)
    &
    \geq
    \frac {\sigma^2}{2}A(\m) + (1-8\sigma^2) D(\m)
    \nonumber
    \\
    &
    \geq \frac {\sigma^2}{2}A(\m) + (1-8\sigma^2) 4\pi|Q(\m)|
    \,,
\end{align}
where the last inequality follows from \eqref{eq:DQ} and the first inequality from
\begin{align*}
\big|\widetilde H(\m)\big|
   &
   =2\bigg|\int_{\R^2}m_3(\partial_1 m_2-\partial_2 m_1)\, dx 
   \bigg|
   \\
   &
\leq \frac 12 \int_{\R^2}|m_3|^2\,dx+ 2 \int_{\R^2}(\partial_1m_2-\partial_2m_1)^2\,dx
   \\
&
\leq \frac 12 A(\m) +8 D(\m)\,.
\end{align*}
We may 
assume without loss of generality that the map $\m$ satisfies
\begin{align*}
E_\sigma(\m)\leq 4\pi\,,
\end{align*}
since otherwise \eqref{eq:lowE} is automatically true.
From the basic lower bound \eqref{lower} and the definition
\eqref{eq:L} of $L$, this implies, for $0<\sigma<\sigma_0\leq 1/4$,
\begin{align}\label{eq:basicAL}
A(\m)\leq 64\pi\,\quad
\text{and}
\quad
L\geq\frac{1}{4\sigma}\,.
\end{align}
If $\sigma_0$ is small enough, then  $L\geq L_0$, for $L_0$ as in 
Lemma~\ref{l:lowA} and Lemma~\ref{l:lowH}.
During the rest of the proof we will assume that $L_0$ is as large as we need, 
and will denote by $C$ a generic constant which may change from line to line, 
but whose explicit dependence on $c_*$ and $c_{\mathrm{MT}}$ can be kept explicit track of.

Combining the definitions \eqref{eq:Esigma} of the energy $E_\sigma$ and \eqref{eq:L} of $L$
with the lower bound
 for the DMI term $\widetilde H(\m)$, we find
\begin{align*}
E_\sigma(\m)
&
\geq
4\pi +\frac{1}{2L^2} +\sigma^2 A(\m)
 -4\pi\sigma^2\rho\cos(\alpha)\cos^2(2 \beta)
 \\
&
- C\sigma^2 \frac{\sqrt{A(\m)}+\rho\sqrt{\ln L}}{\sqrt L} \,,
\end{align*}
and, using 
\[2\rho\frac{\sqrt{\ln L}}{\sqrt{L}}\le \rho^2+\frac{\ln L}{L}\le \rho^2+\frac{1}{\sqrt{L}}\]
and \eqref{eq:basicAL} to estimate the last term,
\begin{align}
&
E_\sigma(\m)
-
4\pi 
+C\sigma^2 (\rho^2 + \sqrt\sigma )
\nonumber
\\
&
\geq
\frac{1}{2L^2} +\sigma^2 A(\m)
 -4\pi\sigma^2\rho\cos(\alpha)\cos^2(2\beta)
  \,.
 \label{eq:lowE0}
\end{align}
Plugging in the lower bound \eqref{eq:lowA}  of Remark~\ref{r:lowA} for the anisotropy term $A(\m)$
we find
\begin{align}\label{eq:lowE1}
&
E_\sigma(\m)
-
4\pi 
+C\sigma^2 (\rho^2 +\sqrt\sigma )
\nonumber
\\
&
\geq
\frac{1}{2L^2} +4\pi\sigma^2\rho^2\ln L -4\pi\sigma^2\rho
 +4\pi\sigma^2\rho\,(1-\cos\alpha)
 \nonumber
 \\
 &
 \quad
 +4\pi\sigma^2\rho\cos(\alpha)\sin^2(2\beta)
 \nonumber
\\
&
\geq
\frac{1}{2L^2} +4\pi\sigma^2\rho^2\ln L -4\pi\sigma^2\rho
 +4\pi\sigma^2\rho\,(1-\cos\alpha)
\,.
\end{align}
Using that the last term is nonnegative, that $E_\sigma(m)\leq 4\pi$,
and that
 $4\pi\sigma^2\rho\leq 2\pi\sigma^2\rho^2+4\pi\sigma^2$,
we deduce in particular that
\begin{align*}
4\pi\sigma^2\rho^2\ln L 
\leq C\sigma^2\rho^2 +C\sigma^{5/2}+4\pi\sigma^2\,.
\end{align*} 
If $L_0$ is large enough and $\sigma_0$ is small enough we can absorb the first term 
of the right-hand side into the first term of the left-hand side, and the second term of the right-hand side into the last, to deduce
\begin{align*}
2\pi\sigma^2\rho^2\ln L \leq 8\pi\sigma^2\,,
\end{align*}
and therefore, recalling that $L\geq 1/4\sigma$,
\begin{align}\label{eq:boundrho1}
\rho\leq  \frac{8}{\ln(1/\sigma)}\,.
\end{align}
Using this to estimate the third term in the first line of \eqref{eq:lowE1}, we obtain
 \begin{align}\label{eq:lowE2}
&
E_\sigma(\m)-4\pi 
+
C\frac{\sigma^{2}}{\ln^2\sigma}
\nonumber
\\
&
\geq
f_\sigma\big(\rho,\frac 1L\big) +4\pi\sigma^2\rho\,(1-\cos\alpha)
 \,,
\\
&
\text{where }
f_\sigma(\rho,t)
=\frac{t^2}{2}
-4\pi\sigma^2\rho^2\ln t -4\pi\sigma^2\rho\,.
\nonumber
\end{align}
The function $t\mapsto f_\sigma(\rho,t)$ is convex, with zero derivative at  $t_*(\rho)=2\sqrt\pi\sigma\rho$,
and we have the identity
\begin{align}\label{eq:fsigma-min}
f_\sigma(\rho,t)
-f_\sigma(\rho,t_*(\rho))
&
=\frac 12 (t-t_*(\rho))^2
\nonumber
\\
&\quad
+4\pi\sigma^2\rho^2g_*\big(t_*(\rho)/t\big),
\\
\text{where }
g_*(x)
&
=\ln x -1 +\frac 1x \geq 0\qquad\forall x>0\,.
\nonumber
\end{align}
Moreover, its minimal value at $t_*$ is given by
\begin{align*}
f_\sigma\big(\rho,t_*(\rho)\big)
&
=2\pi\sigma^2
\bigg(
\rho^2
+2\rho^2\ln\Big(\frac{1}{2\sqrt\pi\sigma\rho}\Big) -2\rho
\bigg)
\\
&
\geq
4\pi\sigma^2\Big(\rho^2\ln\Big(\frac{1}{2\sqrt\pi\sigma\rho}\Big)-\rho\Big)
\\
&
=-\frac{\pi\sigma^2}{\ln\big(\sigma^{-1}\ln(1/\sigma)\big)}
\\
&
\quad
+
4\pi\sigma^2\ln\Big(\frac 1{2\sqrt\pi\sigma\rho}\Big)
\Big(\rho
-\frac{1}{2\ln\big(1/(2\sqrt\pi\sigma\rho)\big)}
\Big)^2
\\
&
\quad
+
\frac{\pi\sigma^2\ln\big(1/(2\sqrt\pi\rho\ln(1/\sigma))\big)}{\ln\big(\sigma^{-1}\ln(1/\sigma)\big)
\ln\big(1/(2\sqrt\pi\sigma\rho)\big)}
\,.
\end{align*}
If $\sigma_0$ is small enough, then thanks to \eqref{eq:boundrho1} we have $2\sqrt\pi\rho\leq 1$ and the denominator of the last term is $\geq \ln^2\sigma$.
Combining this
 with 
 \eqref{eq:fsigma-min}
 and \eqref{eq:boundrho1},
 we deduce
\begin{align*}
f_\sigma (\rho,t)
&
\geq 
- \frac{\pi\sigma^2}{\ln\big(\sigma^{-1}\ln(1/\sigma)\big)} 
\\
&
\quad
+
4\pi\sigma^2\ln\Big(\frac{\ln(1/\sigma)}{\sigma}\Big)
\bigg(\rho
-\frac{1}{2\ln\big(1/(2\sqrt\pi\sigma\rho)\big)}
\bigg)^2
\\
&
\quad 
+4\pi\sigma^2\rho^2g_*\Big(\frac{2\sqrt\pi\sigma\rho}{t}\Big)
\\
&
\quad
+\frac{\pi\sigma^2 
\ln\big(4/(\rho\ln(1/\sigma))\big)
}
{
\ln^2\sigma
}
- C\frac{\sigma^2}{\ln^2\sigma}\,.
\end{align*}
Plugging  this inequality into the lower bound \eqref{eq:lowE2} 
we obtain
 \begin{align}\label{eq:lowE3}
&
\frac{E_\sigma(\m)-4\pi }{\sigma^2}
+
\frac{\pi}{\ln\big(\sigma^{-1}\ln(1/\sigma) \big)}
+
\frac{C_0}{\ln^2\sigma}
\nonumber
\\
&
\geq
4\pi \ln\Big(\frac{\ln(1/\sigma)}{\sigma}\Big)
\bigg(\rho
-\frac{1}{2\ln\big(1/(2\sqrt\pi\sigma\rho)\big)}
\bigg)^2
\nonumber
\\
&\quad
+\frac{\pi
\ln\big(4/(\rho\ln(1/\sigma))\big)
}
{
\ln^2\sigma
}
+4\pi\rho^2g_*\big(2 L \sqrt\pi\sigma\rho \big)
\nonumber
\\
&
\quad
+4\pi\rho\,(1-\cos\alpha)
\,,
\end{align}
for some absolute constant $C_0>0$ depending explicitly on $c_*$ and $c_{\mathrm{MT}}$.
Since all terms in the last three lines of \eqref{eq:lowE3} are nonnegative, 
this implies the lower bound \eqref{eq:lowE}.

\medskip

Now we assume that
the map $\m=\m_\sigma$ saturates that lower bound, that is, it satisfies the upper bound \eqref{eq:lowsatmsigma} 
for some $K\geq C_0$.
In the rest of the proof, we will be denote by $C(K)$ a generic positive constant depending on $K$ and the previous absolute constants, whose value may change from line to line.

Under the assumption \eqref{eq:lowsatmsigma},
 all the nonnegative terms in the right-hand side of \eqref{eq:lowE3} are bounded by $2K/\ln^2\sigma$.
For the second term, this implies
\begin{align*}
\rho
 \geq \frac{4e^{-2K/\pi}}{\ln(1/\sigma)}\,,
\end{align*}
and we deduce
\begin{align*}
&
\bigg|
\frac{1}{2\ln\big(1/(2\sqrt\pi\sigma\rho)\big)}
-\frac{1}{2\ln(1/\sigma)}
\bigg|
=
\frac
{\ln\big(1/(2\sqrt\pi\rho)\big)}
{2\ln(1/\sigma)\ln\big(1/(2\sqrt\pi\sigma\rho)\big)}
\\
&
\leq \frac{\ln\ln(1/\sigma)+CK}{2\ln^2\sigma}\,.
\end{align*}
This and the fact that the first term
in the right-hand side of \eqref{eq:lowE3} is $\leq 2K/\ln^2\sigma$
imply
\begin{align}\label{eq:asymptrho}
\bigg|\rho
-\frac{1}{2\ln(1/\sigma)}
\bigg|
\leq \frac{C(K)}{\ln^{\frac 32}(1/\sigma)}\,,
\end{align}
which is the claimed estimate on $\rho$.

Next we use that
the third term
in the right-hand side of \eqref{eq:lowE3} is $\leq 2K/\ln^2\sigma$ and that $C(K)\rho\geq \ln(1/\sigma)$, to obtain
\begin{align*}
g_*(2L\sqrt\pi\sigma\rho)\leq C(K),
\end{align*}
where we recall that $g_*(x)=\ln x -1+1/x$.
Since $g_*(x)\to +\infty$ as $x\to 0$ and $x\to\infty$, 
we infer
\begin{align*}
\frac{1}{C(K)}\leq L\sigma\rho \leq C(K),
\end{align*}
and using \eqref{eq:asymptrho}  this gives
\begin{align}\label{eq:boundsL}
\frac{1}{C(K)} \frac{\ln(1/\sigma)}{\sigma}
 \leq L \leq C(K)
 \frac{\ln(1/\sigma)}{\sigma}\,.
\end{align}
The fact that the fourth
term in the right-hand side of \eqref{eq:lowE3} is $\leq 2K/\ln^2\sigma$,
implies
\begin{align*}
\frac 1C \dist^2(\alpha,2\pi\Z) \leq 1-\cos\alpha \leq \frac{C(K)}{\ln(1/\sigma)}\,.
\end{align*}
Since $\alpha$ can be chosen in $[-\pi,\pi]$, this implies the claimed estimate on $\alpha$.

Finally, 
plugging the 
assumption
 \eqref{eq:lowsatmsigma} 
 and the estimate \eqref{eq:asymptrho} of $\rho$ 
 back into the inequality \eqref{eq:lowE0}
implies
\begin{align*}
A(\m)
&
\leq \frac{C(K)}{\ln(1/\sigma)}\,.
\end{align*}
Combining this with the first lower bound \eqref{eq:lowAsin} on $A(\m)$
and with the estimates \eqref{eq:asymptrho} and \eqref{eq:boundsL} of $\rho$ and $L$ gives
\begin{align*}
\sin^2(2\beta) 
&
\leq \frac{ C(K)\sigma^2}{\ln(1/\sigma)}\,,
\end{align*}
and therefore
\begin{align*}
\dist^2\Big(\beta,\frac\pi 2\Z\Big)
\leq \frac{\pi^2}{4} \sin^2(2\beta) \leq C(K)\frac{\sigma^2}{\ln(1/\sigma)}\,.
\end{align*}
Taking into account Remark~\ref{r:betaw*},
after possibly redefining the angle $\phi$ this implies the claimed estimate on $\beta$.

Recalling the definition \eqref{eq:L} of $L$, the above bounds \eqref{eq:boundsL}
turn into
\begin{align*}
\frac{1}{C(K)}
\frac{\sigma^2}{\ln^2 \sigma}
\leq
\int_{\R^2}|\nabla \m_\sigma|^2\, dx -8\pi \leq C(K)
\frac{\sigma^2}{\ln^2 \sigma }\,.
\end{align*}
Combining this with the stability estimate \eqref{stability-hm}
and  the classical identity
\begin{align*}
\int_{\R^2}|\nabla \m_\sigma|^2\, dx -8\pi
&
=
\int_{\R^2}|\nabla \m_\sigma -\nabla\Psi|^2\,dx
\\
&
\quad
-
\int_{\R^2}|\nabla\Psi|^2|\m_\sigma -\Psi|^2\,dx
\\
&
\leq\int_{\R^2}|\nabla \m_\sigma -\nabla\Psi|^2\,dx\,, 
\end{align*}
which follows from the harmonic map equation $-\Delta\Psi=|\nabla\Psi|^2\Psi$  and the identity
$|\m_\sigma -\Psi|^2 =2(\Psi-\m_\sigma)\cdot\Psi$,
we deduce
\begin{align*}
\frac{1}{C(K)}
\frac{\sigma^2}{\ln^2 \sigma}
\leq
\int_{\R^2}|\nabla \m_\sigma -\nabla\Psi|^2\,dx
 \leq C(K)
\frac{\sigma^2}{\ln^2 \sigma }\,,
\end{align*}
thus concluding the proof of Proposition~\ref{p:lowE}.
\end{proof}

\section{Existence of minimizers and proof of Theorem~\ref{t:main}}\label{s:exist}

In this section we rely on the characterization of near-minimizers in Proposition~\ref{p:lowE} 
to show that the infimum of $E_\sigma$ on $\mathcal W_{-1}$ is attained, provided $\sigma$ is small enough. 
This follows the
 standard concentration-compactness
strategy applied also in \cite{esteban1986commun,
lin2004cpam,melcher2014proc,
doring2017cvpde},
but we need a different argument to rule out ``vanishing''.

\begin{prop}\label{p:existence}
There exists an absolute constant $\sigma_0>0$, depending explicitly on $c_{\mathrm{MT}}$ and $c_*$, such that
\begin{align*}
\inf_{\mathcal W_{-1}} E_\sigma = \min_{\mathcal W_{-1}} E_\sigma\,,
\end{align*}
for $0<\sigma<\sigma_0$.
\end{prop}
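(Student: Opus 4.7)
The plan is a concentration-compactness argument in which Proposition~\ref{p:lowE} substitutes for the coercivity that easy-axis anisotropy would otherwise provide. Let $\m_n\in\mathcal W_{-1}$ be a minimizing sequence. The upper bound of Proposition~\ref{p:up} combined with the lower bound of Proposition~\ref{p:lowE} guarantees that, for $\sigma$ smaller than some absolute threshold, $E_\sigma(\m_n)$ eventually satisfies the near-saturation inequality \eqref{eq:lowsatmsigma} with a constant $K$ depending only on $C_0$. Applying Proposition~\ref{p:lowE} provides, for each large $n$, a Möbius map $\Psi_n=\m^{\{z_n,\rho_n,\phi_n,\alpha_n,\beta_n\}}$ in the parametrization of Lemma~\ref{l:param_mobius}, with $\rho_n$ pinned to a compact interval of order $1/|\ln\sigma|$, $|\alpha_n|$ and $|\beta_n|$ controlled, and
\begin{align*}
\int_{\R^2}|\nabla\m_n-\nabla\Psi_n|^2\,dx\leq C(K)\frac{\sigma^2}{\ln^2\sigma}\,.
\end{align*}

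Using the translation and corotation invariances of both $E_\sigma$ and $Q$, replace $\m_n$ by $\mathfrak R_{-\phi_n}\mathfrak T_{-z_n}\m_n$ so that $z_n=0$ and $\phi_n=0$. Up to a subsequence, $\rho_n\to\rho_*>0$, $\alpha_n\to\alpha_*$, $\beta_n\to\beta_*$, so that $\Psi_n\to\Psi_*\in\mathcal M$ strongly in $\dot H^1$ and locally uniformly. The sequence $\m_n$ is bounded in $H^1_{\loc}$, hence (by Rellich and a diagonal extraction) admits a subsequence converging a.e.\ and in $L^2_{\loc}$ to some $\m$ with $|\m|=1$ a.e., with $\nabla\m_n\rightharpoonup\nabla\m$ weakly in $L^2$. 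Weak lower semicontinuity yields $D(\m)\leq\liminf D(\m_n)<\infty$; Fatou's lemma applied to $m_{n,3}^2$ gives $A(\m)\leq\liminf A(\m_n)<\infty$.

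The delicate step, and the analogue of ruling out \emph{vanishing}, is to verify $Q(\m)=-1$. The $\dot H^1$-closeness bound passes to the weak limit as
\begin{align*}
\int_{\R^2}|\nabla(\m-\Psi_*)|^2\,dx\leq C(K)\frac{\sigma^2}{\ln^2\sigma}\,,
\end{align*}
pinning $\m$ to a small $\dot H^1$-neighborhood of the degree $-1$ Möbius map $\Psi_*$, whose support scale $\rho_*$ is bounded away from $0$ and $\infty$. Combined with the uniform $L^2$-bound on $m_{n,3}$ and the explicit decay of $\Psi_n$ at infinity, this enables a tail argument showing that no topological charge escapes to infinity. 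The homotopy invariance of the degree under small $\dot H^1$-perturbations then forces $Q(\m)=Q(\Psi_*)=-1$, so $\m\in\mathcal W_{-1}$. In the easy-axis model this was automatic from the far-field condition $\m\to\pm\mathbf e_3$; here, Proposition~\ref{p:lowE} supplies the missing localization.

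Finally, to show $\m$ is a minimizer, only the DMI term $\widetilde H$ requires care. On any bounded ball $B_R$, Rellich provides strong $L^2$-convergence of $m_{n,3}$ and pointwise a.e.\ convergence of $\nabla\m_n$ bounded in $L^2$, so $\widetilde H(\m_n;B_R)\to\widetilde H(\m;B_R)$. For the tails, the bound $A(\m_n)\leq C$ together with $\|\nabla\m_n\|_{L^2}\leq C$ and the Möbius approximation provide a uniform-in-$n$ control of $|\widetilde H(\m_n;\R^2\setminus B_R)|$ by a quantity tending to $0$ as $R\to\infty$, by the same Cauchy--Schwarz and Fourier arguments used in \S~\ref{ss:lowH}. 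Letting $R\to\infty$ gives $\widetilde H(\m)\leq\liminf\widetilde H(\m_n)$, hence $E_\sigma(\m)\leq\liminf E_\sigma(\m_n)=\inf_{\mathcal W_{-1}}E_\sigma$. The main obstacle is the tail-of-DMI estimate and the degree-preservation step: without easy-axis coercivity we cannot rely on pointwise decay of $\m_n$, and must substitute the quantitative Möbius approximation provided by Proposition~\ref{p:lowE} together with the uniform $L^2$-bound on $m_{n,3}$.
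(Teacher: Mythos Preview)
Your strategy of using Proposition~\ref{p:lowE} directly to localize the minimizing sequence is a good instinct, and it parallels how the paper rules out vanishing. But the argument as written has two genuine gaps.

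\textbf{Lower semicontinuity.} Your tail control on $\widetilde H$ does not give a bound vanishing as $R\to\infty$. From the M\"obius approximation you only obtain
\[
\|\nabla\m_n\|_{L^2(|x|>R)}\leq \|\nabla\Psi_n\|_{L^2(|x|>R)}+\|\nabla(\m_n-\Psi_n)\|_{L^2}\,,
\]
and the second term is of fixed size $\sqrt\delta\sim\sigma/|\ln\sigma|$, independent of $R$. Paired with $A(\m_n)\leq C$, Cauchy--Schwarz yields a residual tail of order $C\sqrt\delta$ that does not disappear, so you only conclude $E_\sigma(\m)\leq\inf_{\mathcal W_{-1}}E_\sigma+C\sigma^2\sqrt\delta$, not equality. (Also, you invoke ``pointwise a.e.\ convergence of $\nabla\m_n$'' on $B_R$; you only have weak $L^2$, which is what actually pairs with the strong $L^2$ convergence of $m_{n,3}$.) The paper bypasses all of this: it rewrites $e_\sigma(\m)$ as a sum of squares (the identity \eqref{eq:Esigmaconvex}), so $E_\sigma$ is convex in $(\nabla\m,m_3)$ and weak lower semicontinuity is immediate.

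\textbf{Degree preservation.} The assertion that small $\dot H^1$-distance forces $Q(\m)=Q(\Psi_*)$ is not a triviality; you would need something like the embedding $\dot H^1\hookrightarrow\mathrm{BMO}$ on $\mathbb S^2$ together with Brezis--Nirenberg continuity of the VMO degree, and you do not spell this out. The paper's argument is more elementary: it uses that $D(\m)+4\pi Q(\m)=\int W(\nabla\m,\m)$ with $W\geq 0$ quadratic in $\nabla\m$, hence weakly lower semicontinuous, and combines this with \emph{full} tightness of $|\nabla\m_n|^2+(m_{n,3})^2$ (established via Lions' lemma, including the dichotomy case that you skip) to conclude $E_\sigma(\m)+4\pi Q(\m)\leq \inf_{\mathcal W_{-1}}E_\sigma-4\pi<0$, hence $Q(\m)<0$.

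In short: your M\"obius-approximation shortcut replaces vanishing/dichotomy by an ``almost-tightness'' up to $\delta$, but that residual $\delta$ is exactly what prevents both your DMI tail estimate and a clean lower semicontinuity. The paper instead pays the price of the full concentration-compactness (including a splitting construction for dichotomy), which then makes both the lsc and the degree step straightforward.
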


Thanks to this existence result we may now prove our main theorem.

\begin{proof}[Proof of Theorem~\ref{t:main}]
The existence of a minimizer $\m_\sigma\in\mathcal W_{-1}$ is provided by Proposition~\ref{p:existence}.
The upper bound \eqref{eq:up} in the energy expansion is provided by the construction in \S~\ref{s:up}.
The lower bound and the description of the minimizer in terms of Möbius maps is provided by Proposition~\ref{p:lowE}, 
taking into account the parametrization of Möbius maps provided by Lemma~\ref{l:param_mobius}.
\end{proof}

Finally we prove that $E_\sigma$ attains its infimum on $\mathcal W_{-1}$.

\begin{proof}[Proof of Proposition~\ref{p:existence}]
The proof is divided in three steps: some basic observations on minimizing sequences, 
the conclusion under a tightness assumption, 
and finally a proof of  that tightness assumption.
All these steps follow well-known arguments,
but  
we provide a fair amount of details to convince the reader that they do adapt to our case.

\medskip
\noindent\textit{Step 1: Basic observations.}
\medskip

The first observation is that the energy can be 
rewritten as
\begin{align}
E_\sigma(\m)
&
=\int_{\R^2} e_\sigma(\m)\, dx,
\label{eq:Esigmaconvex}
\\
e_\sigma(\m)
&
=\frac 12 (\partial_2 m_1-2\sigma^2 m_3)^2
+\frac 12 (\partial_1 m_2 +2\sigma^2 m_3)^2
\nonumber
\\
&\quad 
+\frac 12 (\partial_1 m_1)^2 +\frac 12 (\partial_2 m_2)^2
+
\sigma^2(1-4\sigma^2)m_3^2\,.
\nonumber
\end{align}
For $0<\sigma<1/2$, this identity implies
\begin{align*}
\int_{\R^2} m_3^2\, dx  &
\leq \frac{1}{\sigma^2(1-4\sigma^2)} E_\sigma(\m)\,,
\\
\text{and }
\int_{\R^2}|\nabla \m|^2\, dx 
&
\leq 4 E_\sigma(\m) + 8\sigma^4\int_{\R^2} m_3^2\, dx\,,
\end{align*}
so any minimizing sequence $\m^{(k)}\in\mathcal W_{-1}$ satisfying
\begin{align*}
E_\sigma (\m^{(k)}) \to \inf_{\mathcal W_{-1}} E_\sigma\,,
\end{align*}
admits a subsequence, still denoted $\m^{(k)}$, such that
\begin{align*}
\nabla \m^{(k)}\rightharpoonup \nabla\m
\quad
\text{ and } 
\quad
m_3^{(k)}\rightharpoonup m_3\qquad\text{weakly in }L^2(\R^2)\,,
\end{align*}
for some $\m\in\mathcal W$, where the space $\mathcal W$ is defined in \eqref{eq:W}.
Moreover, the identity \eqref{eq:Esigmaconvex}
also implies that $E_\sigma$ is a convex function of $\m$,
and therefore satisfies the lower semicontinuity property
\begin{align}\label{eq:lsc}
&E_\sigma(\m)\leq \liminf E_\sigma(\m^{(k)})=\inf_{\mathcal W_{-1}}E_\sigma\,.
\end{align}
In order to conclude that $\m$ minimizes $E_\sigma$ in $\mathcal W_{-1}$, it remains to show that the weak limit $\m$ actually belongs to $\mathcal W_{-1}$.
This is not directly obvious because the topological degree $Q(\m)$ defined in \eqref{eq:Q} is not continuous with respect to that weak convergence.

Note that, if $0<\sigma<\sigma_0$ for a small enough $\sigma_0$,
 the upper bound \eqref{eq:up}
together with the basic lower bound
\eqref{lower} 
imply
\begin{align}\label{eq:highdeg}
\inf_{\mathcal W_{-1}} E_\sigma < \inf_{\mathcal W_q} E_\sigma\qquad\text{ if }|q|\geq 2\,.
\end{align}
From this and \eqref{eq:lsc} it follows that $Q(\m)\in\lbrace 0,\pm 1\rbrace$.
In order to conclude that $Q(\m)=-1$,
we therefore only need to show that $Q(\m)<0$.

\medskip
\noindent
\textit{Step 2. Conclusion under a tightness condition.}
\medskip

As in \cite{melcher2014proc}
the proof that $Q(\m)<0$ relies on a concentration-compactness argument.
That argument shows that,
along a non-relabeled subsequence and
modulo translations $\mathfrak T_{z_k}$ 
which keep the energy invariant \eqref{eq:Tx0Rphi},
 the $L^1$ sequence
\begin{align}
&f_k =|\nabla \m^{(k)}|^2 + (m_3^{(k)})^2
\quad
\text{ is uniformly tight, that is,}
\nonumber
\\
&
\sup_{k\geq 1} \int_{|x|\geq R} f_k\, dx \longrightarrow 0
\qquad\text{as }R\to + \infty \,.
\label{eq:tight}
\end{align}
That tightness, 
together with the strong convergence of $\m^{(k)}$ in $L^2(B_R)$ for any $R>0$ ensured by Rellich-Kondratchov's theorem,
 implies, as in \cite[Lemma~4.1]{melcher2014proc},
 \begin{align*}
 A(\m^{(k)}) +\widetilde H(\m^{(k)}) \longrightarrow  A(\m) +\widetilde H(\m)\,.
 \end{align*}
Moreover, 
classical arguments, 
see e.g.  \cite[Theorem~1.6]{struwe2008varmeth},
show that the functional
 $D(\m)+4\pi Q(\m)$ is lower semicontinuous
on $H^1_c(\R^2;\mathbb S^2)$ 
  with respect to the weak convergence $\nabla \m^{(k)}\rightharpoonup \nabla \m$ in $L^2(\R^2)$, because it can be written as 
\begin{align*}
D(\m)+4\pi Q(\m)
&
=\int_{\R^2} W(\nabla\m,\m)\, dx \,,\\
W(\nabla\m,\m)
&=\frac 12|\nabla \m|^2 -\m\cdot\partial_1\m\times\partial_2\m \geq 0,
\end{align*}
and $W(\m,\cdot)$ is a nonnegative, hence convex, quadratic form.
Since $E_\sigma =D +\sigma^2 (A+\widetilde H)$,
we infer the lower semicontinuity property
\begin{align*}
E_\sigma(\m) +4\pi Q(\m) \leq \liminf_{k\to\infty} \big(E_\sigma(\m^{(k)})+4\pi Q(\m^{(k)}) \big)\,.
\end{align*}
Recalling that $\m^{(k)}\in\mathcal W_{-1}$ is a minimizing sequence, we deduce 
\begin{align*}
E_\sigma(\m) + 4\pi Q(\m)  \leq \inf_{\mathcal W_{-1}}E_\sigma -4\pi \,.
\end{align*}
If $0<\sigma<\sigma_0$ for a small enough $\sigma_0$, the right-hand side is negative due to the upper bound  \eqref{eq:up}, and $E_\sigma(\m)\geq 0$ due to \eqref{lower},
so this implies 
$Q(\m) <0$, hence $Q(\m)=-1$ and $\m\in\mathcal W_{-1}$.
Together with \eqref{eq:lsc} 
this concludes the proof that the infimum of $E_\sigma$ on $\mathcal W_{-1}$ is attained,
provided we show the tightness property \eqref{eq:tight}.

\medskip
\noindent
\textit{Step 3. Proof of the tightness property.}
\medskip

As in \cite{melcher2014proc}, the tightness property \eqref{eq:tight} is obtained by
ruling out, along a subsequence,
 the 
two other cases
in the alternative established in
\cite[Lemma~I.1]{lions1984aihp}:  
\textit{vanishing}, that is,
\begin{align}\label{eq:vanish}
\sup_{z_0\in\R^2}\int_{|z-z_0|\leq R} f_k\, dz \to 0\,,\quad\forall R>0\,,
\end{align}
 and \textit{dichotomy}, 
 which implies, 
 modulo translations $\mathfrak T_{z_k}$,
 the existence of  $R_k>1$ such that
\begin{align}\label{eq:dicho}
&
\liminf \int_{|z|\leq  R_k} f_k \, dz 
 >0,
 \quad
\liminf \int_{|z|\geq 2R_k}f_k\, dz
 >0\,,
\\
&
\text{and }
\int_{R_k\leq |z|\leq 2 R_k}f_k\, dz \to 0\,.
\nonumber
\end{align} 
Contrary to \cite[Lemma~4.2]{melcher2014proc} for the easy-axis anisotropy,
in our case
vanishing \eqref{eq:vanish} does not seem to directly imply that $\widetilde H(\m^{(k)})\to 0$.
We rely instead on the upper bound \eqref{eq:up} and the characterization of near-minimizers in Proposition~\ref{p:lowE} to  discard vanishing.
Thanks to \eqref{eq:up} and the fact that $\m^{(k)}$ is a minimizing sequence, 
we know indeed that
$\m^{(k)}$ satisfies \eqref{eq:lowsatmsigma} for some absolute constant $K>0$ and large enough $k$.
According to Proposition~\ref{p:lowE}, there exists therefore a Möbius map $\Psi^{(k)}$ and $z_k\in\R^2$ such that
\begin{align*}
\int_{\R^2}|\nabla \m^{(k)}-\nabla\Psi^{(k)}|^2\, dz 
&
\leq \frac 14\,,
\\
\text{and}
\quad
\int_{|z-z_k|\leq 1}|\nabla\Psi^{(k)}|^2\, dz 
&
\geq 1\,.
\end{align*}
The second inequality follows from the fact that the concentration scale $\rho>0$ of the Möbius map $\Psi$ provided by Proposition~\ref{p:lowE} is arbitrarily small if $\sigma_0$ is small enough.
These two inequalities imply that
\begin{align*}
\int_{|z-z_k| \leq 1} f_k\, dz \geq \int_{|z-z_k|\leq 1}|\nabla \m^{(k)}|^2\, dz \geq \frac 14\,,
\end{align*}
thus ruling out vanishing \eqref{eq:vanish}.

The dichotomy case \eqref{eq:dicho} can be ruled out  as in \cite{melcher2014proc},
by using  the construction of \cite[Lemma~8]{doring2017cvpde} adapted to our setting.
It provides two maps
$\m^{(k,1)},\m^{(k,2)}\in \mathcal W$ such that
\begin{align}
&
\m^{(k)} =\m^{(k,1)} \text { in }B_{R_k}\,,\quad \m^{(k)}=\m^{(k,2)} \text{ outside }B_{2R_k}\,,
\nonumber
\\
&
\int_{|z|\geq R_k} |\nabla \m^{(k,1)}|^2 +(m^{(k,1)}_3)^2\, dx \longrightarrow 0\,,
\nonumber
\\
&
\int_{|z|\leq 2 R_k} |\nabla \m^{(k,2)}|^2 +(m^{(k,2)}_3)^2\, dx \longrightarrow 0\,.\label{eq:cutoff}
\end{align}
We briefly sketch that construction.
We select a radius $\rho_k\in [R_k,2R_k]$ such that
\begin{align*}
R_k\int_{\partial B_{\rho_k}}f_k\, d\mathcal H^1 \leq  \int_{R_k<|z|<2R_k}\!
f_k\, dz =\delta_k\to 0\,.
\end{align*}
This implies that the map 
$\theta\mapsto \hat\m^{(k)}(\theta)=\m^{(k)}(\rho_ke^{i\theta})$
satisfies
\begin{align*}
&
\frac{1}{2}\int_0^{2\pi}
|\partial_\theta \hat\m^{(k)}|^2\, d\theta
+ R_k^2
\int_0^{2\pi} (\hat m^{(k)}_3)^2\, d\theta \leq \delta_k\,.
\end{align*}
In particular, $\hat\m^{(k)}$ 
has small oscillation over $\mathbb S^1$, 
 its average must be close to $\mathbb S^1\times\lbrace 0\rbrace\subset\mathbb S^2$,
and we may find $\xi^{(k)}\in\mathbb S^1\times\lbrace 0\rbrace$ such that 
\begin{align*}
\sup_{\theta\in\mathbb S^1}\,
 |\hat\m^{(k)}(\theta)-\xi^{(k)}|^2 \leq C \delta_k\,.
\end{align*}
Then we define, using polar coordinates $x=re^{i\theta}$,
\begin{align*}
\m^{(k,1)}
&
=\begin{cases}
\m^{(k)} &\text{ in }B_{\rho_k},\\
\Pi\big(\xi^{(k)} +
\chi_1(r/\rho_k) \big(\hat\m^{(k)}(\theta)-\xi^{(k)}\big) \big)
&\text{ for }r\geq \rho_k\,,
\end{cases}
\\
\m^{(k,2)}
&
=\begin{cases}
\m^{(k)} &\text{ outside } B_{\rho_k},\\
\Pi\big(
\xi^{(k)}+
\chi_2(r/\rho_k)\big(\hat\m^{(k)}(\theta) - \xi^{(k)}\big)
\big)
&\text{ for }0<r< \rho_k\,,
\end{cases}
\end{align*}
where $\Pi(X)=X/|X|$ is the projection onto $\mathbb S^2$ 
and
 $\chi_1,\chi_2$ are smooth cut-off functions satisfying 
\begin{align*}
\mathbf 1_{r\leq 1}\leq \chi_1(r) \leq \mathbf 1_{r\leq 2},
\quad
\mathbf 1_{r\geq 1}\leq \chi_2(r)\leq\mathbf 1_{r\geq 1/2}\,,
\end{align*}
and it can be checked that these maps satisfy \eqref{eq:cutoff}.

The properties \eqref{eq:cutoff} of 
$\m^{(k,1)},\m^{(k,2)}$  imply that their integer-valued topological degrees satisfy 
\begin{align*}
Q(\m^{(k,1)})+Q(\m^{(k,2)})
=Q(\m^{(k)})=-1\,,
\end{align*}
and their energies satisfy
\begin{align}\label{eq:splitEsigma}
E_\sigma(\m^{(k)})=E_\sigma(\m^{(k,1)})+E_\sigma(\m^{(k,2)})+o(1)\,.
\end{align}
Since $\m^{(k)}$ is a minimizing sequence for $E_\sigma$ in $\mathcal W_{-1}$, 
and $E_\sigma\geq 0$, 
this together with the  inequality \eqref{eq:highdeg} 
implies, along a subsequence and for large enough $k$,
\begin{align*}
Q(\m^{(k,1)})=q_1 \in\lbrace  0,\pm 1\rbrace\,,
\qquad
Q(\m^{(k,2)})=q_2\in\lbrace 0,\pm 1\rbrace\,.
\end{align*}
Since $q_1+q_2=-1$, we have either $q_1=-1$ or $q_2=-1$.
Moreover, the first line in \eqref{eq:dicho}, the 
first inequality in \eqref{lower},
and the properties \eqref{eq:cutoff} of $\m^{(k,1)},\m^{(k,2)}$ imply, along a subsequence,
\begin{align*}
E(\m^{(k,1)})\to\mu_1>0,\quad E(\m^{(k,2)})\to\mu_2>0.
\end{align*}
If $q_1=-1$, we also have $\mu_1 \geq \inf_{\mathcal W_{-1}} E_\sigma =  \lim E_\sigma(\m^{(k)})$,
thus contradicting \eqref{eq:splitEsigma}.
And if $q_2=-1$ we have $\mu_2 \geq \lim E_\sigma(\m^{(k)})$
and again a contradiction. 
This shows that the dichotomy case \eqref{eq:dicho} cannot occur and concludes the proof of the tightness property \eqref{eq:tight}.
\end{proof}

\appendix

\section{Critical Sobolev spaces on the plane and on the sphere}\label{a:crit_sob_plane}

In this appendix we recall, 
for the readers' convenience, the identification
between functions with finite Dirichlet energy on the plane and on the sphere, via stereographic projection.
And, more generally, functions on $\R^n$ and $\mathbb S^n$ with finite $n$-energy for any $n\geq 2$.
That is, we compare the homogeneous Sobolev space
\begin{align}\label{eq:HRn}
\mathcal H(\R^n)
&
=\big\lbrace v\in W^{1,1}_{\loc}(\R^n)\colon \int_{\R^n}|\nabla v|^n\, dx <\infty\big\rbrace\,,
\end{align}
and the Sobolev space
 $W^{1,n}(\mathbb S^n)$.
As in \eqref{eq:stereo} we 
 define
\begin{align*}
\Phi\colon \R^n\cup\lbrace\infty\rbrace
&
\to\mathbb S^n\subset\R^{n+1}\,,
\\
x
&\mapsto \bigg( \frac{2x}{1+|x|^2},\frac{|x|^2-1}{1+|x|^2}\bigg)
\quad\text{if }x\in\R^n
\,,
\\
\infty
&
\mapsto \mathbf e_{n+1}=(0,\ldots,0,1)\,,
\end{align*}
whose inverse is the stereographic projection
\begin{align*}
\Phi^{-1}\colon \mathbb S^n
&
\to\R^n\,,
\\
\xi
&
\mapsto \frac{1}{1-\xi_{n+1}}(\xi_1,\ldots,\xi_n)
\quad\text{if }\xi\neq \mathbf e_{n+1}
\,,
\\
\mathbf e_{n+1} &\mapsto \infty\,.
\end{align*}
A crucial property of $\Phi$ is its conformality: 
\begin{align*}
\langle \partial_i\Phi,\partial_j\Phi\rangle =\frac{4}{(1+|x|^2)^2}\delta_{ij}\qquad\text{for }i,j\in\lbrace 1,\ldots,n\rbrace\,.
\end{align*}
(In general, a diffeomorphism between riemannian manifold is conformal if it induces a conformal change of metric, see e.g. \cite[Chapter~7]{reshetnyak94}.)
Given any $w\in C^1(\mathbb S^n)$, 
we have $v=w\circ\Phi\in C^1(\R^n)$, 
and the $n$-energy has the conformal invariance property 
\begin{align}\label{eq:conf_inv}
\int_{\mathbb S^n} |\nabla_T w|^n \, d\mathcal H^n = \int_{\R^n} |\nabla v|^n \, dx\,,
\end{align}
where $\nabla_T$ denotes the tangential gradient on $\mathbb S^n$.
Note that, given a measurable function $v\colon\R^n\to\R$, 
the function $w=v\circ\Phi^{-1}$
 is 
defined almost everywhere and measurable on $\mathbb S^n$. 
The following result is folklore, but we could not find a precise reference so we include a proof below.

\begin{prop}\label{p:crit_sob_Rn}
For any measurable $v\colon\R^n\to\R$, we have $v\in\mathcal H(\R^n)$ if and only if $w=v\circ\Phi^{-1}\in W^{1,n}(\mathbb S^n)$,
and the identity \eqref{eq:conf_inv} is satisfied.
\end{prop}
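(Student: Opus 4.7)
The plan is to prove the equivalence by first verifying the pointwise conformal identity for smooth maps and then reducing the problem to two integrability statements: one for the gradient (which is essentially automatic from conformality) and one for the function itself (which is the real content). Throughout I write $\lambda(x) = 2/(1+|x|^2)$ for the conformal factor, so that $\langle\partial_i \Phi, \partial_j \Phi\rangle = \lambda^2 \delta_{ij}$ and the surface measure pulls back as $d\mathcal H^n = \lambda^n\, dx$ under the parametrization by $\Phi$.

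For smooth $w \in C^1(\mathbb S^n)$ with $v = w \circ \Phi$, the chain rule combined with the conformal relation gives the pointwise identity $|\nabla v(x)|^n = \lambda^n(x)|\nabla_T w(\Phi(x))|^n$, and the factor $\lambda^n$ cancels exactly against the Jacobian, yielding \eqref{eq:conf_inv} together with $\int_{\mathbb S^n} |w|^n\, d\mathcal H^n = \int_{\R^n} |v|^n \lambda^n\, dx$. This already proves the direction $w \in W^{1,n}(\mathbb S^n) \Rightarrow v \in \mathcal H(\R^n)$ by smooth approximation: since $\Phi$ is a smooth diffeomorphism from $\R^n$ onto $\mathbb S^n \setminus \lbrace\mathbf e_{n+1}\rbrace$, the function $v = w\circ\Phi$ is automatically in $W^{1,n}_{\loc}(\R^n) \subset W^{1,1}_{\loc}(\R^n)$, and passing to the limit in the conformal identity yields $\int_{\R^n} |\nabla v|^n\, dx = \int_{\mathbb S^n} |\nabla_T w|^n\, d\mathcal H^n < \infty$.

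For the converse, given $v \in \mathcal H(\R^n)$, the map $w = v\circ\Phi^{-1}$ belongs to $W^{1,n}_{\loc}(\mathbb S^n \setminus \lbrace\mathbf e_{n+1}\rbrace)$, and conformal invariance applied on $\Phi(B_R)$ with $R \to \infty$ (using that $\lbrace\mathbf e_{n+1}\rbrace$ has measure zero) gives $\int_{\mathbb S^n} |\nabla_T w|^n\, d\mathcal H^n = \int_{\R^n} |\nabla v|^n\, dx < \infty$. The genuine task is then to prove $w \in L^n(\mathbb S^n)$, which is equivalent to the weighted bound
\[ \int_{\R^n} \frac{|v(x)|^n}{(1+|x|^2)^n}\, dx < \infty. \]
I would establish this by a dyadic Poincaré–Sobolev argument on the annuli $A_k = B_{2^{k+1}} \setminus B_{2^k}$: if $\bar v_k$ denotes the average of $v$ over $B_{2^k}$, the scaled Poincaré inequality yields $\|v - \bar v_k\|_{L^n(B_{2^k})}^n \leq C\, 2^{kn} \|\nabla v\|_{L^n(\R^n)}^n$, and a telescoping estimate yields $|\bar v_{k+1} - \bar v_k| \leq C \|\nabla v\|_{L^n(\R^n)}$, hence $|\bar v_k| \leq C(1+k)$. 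On $A_k$ the weight $(1+|x|^2)^{-n}$ is of order $2^{-2kn}$, so each annulus contributes at most $C\, 2^{-kn}\bigl[\|\nabla v\|_{L^n}^n + (1+k)^n\bigr]$ to the weighted integral, and the resulting geometric series converges.

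Finally, to upgrade $w$ from a Sobolev function off the north pole to a Sobolev function on all of $\mathbb S^n$, I would invoke that a single point has zero $n$-capacity in $\mathbb S^n$: a logarithmic cutoff $\eta_k$ vanishing in a shrinking geodesic ball around $\mathbf e_{n+1}$ and satisfying $\int|\nabla_T \eta_k|^n\, d\mathcal H^n \to 0$ allows one to identify the distributional gradient of $w$ on $\mathbb S^n$ with the one computed pointwise off the north pole, ruling out any singular mass at $\mathbf e_{n+1}$. I expect the weighted dyadic bound to be the main obstacle: since $v$ may grow as fast as $\log|x|$ at infinity while the weight decays only like $|x|^{-2n}$, the estimate is just barely summable and the Poincaré constants must be tracked carefully; by contrast, the conformal identity and the capacity argument for the removable point singularity are quite standard.
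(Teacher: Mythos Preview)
Your proof is correct and shares the paper's overall structure: smooth approximation for the easy direction; for the converse, reduction to showing $w\in L^n(\mathbb S^n)$ via a weighted bound on $v$, followed by removability of the point $\mathbf e_{n+1}$. The differences lie only in the execution of these two technical steps. For the weighted bound $\int_{\R^n}|v|^n(1+|x|^2)^{-n}\,dx<\infty$, you run a dyadic Poincar\'e argument on annuli with telescoping means $|\bar v_k|\leq C(1+k)$; the paper (in its Lemma~\ref{l:vLp}) instead controls the spherical average $f(r)=\Xint{-}_{\mathbb S^{n-1}}v(r\omega)\,d\mathcal H^{n-1}(\omega)$ by $|f(R)|\leq C\ln R$ via the fundamental theorem of calculus and H\"older, and then invokes the supercritical embedding $W^{1,n}(\mathbb S^{n-1})\subset L^\infty(\mathbb S^{n-1})$ to handle the angular oscillation on each sphere. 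For the removable singularity you use zero $n$-capacity of a point via logarithmic cutoffs, whereas the paper gives a Fubini-type slicing argument in local charts. Both pairs of techniques are standard and of equivalent strength; your dyadic approach is arguably more elementary since it avoids the Sobolev embedding on spheres, while the paper's version makes the radial structure and the logarithmic growth more transparent.
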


Before proving proving Proposition~\ref{p:crit_sob_Rn} we gather some consequences that are relevant to the setting of this article.
First, if $v\in\mathcal H(\R^n;\mathbb S^n)$, then $w=v\circ\Phi^{-1}\in W^{1,n}(\mathbb S^n,\mathbb S^n)$ 
 has a well-defined topological degree $Q\in\mathbb Z$, see e.g. \cite{brezis1995degree}.
Using the classical expression of that degree for $C^1$ maps and arguing by approximation gives the formula
 \begin{align*}
Q
&
=
\Xint{-}_{\mathbb S^n}\det(w,\partial_{\tau_1}w,\ldots,\partial_{\tau_n}w)\, d\mathcal H^{n}
\,,
\end{align*}
where $(\tau_1,\ldots,\tau_n)$ is any choice of direct orthonormal frame of the tangent space to $\mathbb S^n$.
 Coming back to $v$ and using again the conformality of $\Phi$ we obtain
 
\begin{cor}\label{c:Q}
For a sphere-valued map $v\in \mathcal H(\R^n;\mathbb S^n)$,
the quantity
\begin{align*}
Q(v)
&
=
\frac{1}{\mathcal H^{n}(\mathbb S^n)}\int_{\mathbb R^n} \det(v, \partial_1 v,\cdots, \partial_n v) \, dx  \, ,
\end{align*}
is a well-defined integer, which corresponds to the topological degree  of 
 $w=v\circ\Phi^{-1} \in W^{1,n}(\mathbb S^n;\mathbb S^n)$.
\end{cor}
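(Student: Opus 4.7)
The plan is to reduce the claim to the classical integral (Kronecker) formula for the topological degree of a smooth map $\mathbb{S}^n \to \mathbb{S}^n$, via the stereographic projection $\Phi$ and the conformal change of variables it induces.

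By Proposition~\ref{p:crit_sob_Rn}, the map $w = v \circ \Phi^{-1}$ lies in $W^{1,n}(\mathbb{S}^n;\mathbb{S}^n)$, and its topological degree $\deg(w) \in \mathbb{Z}$ is well-defined by \cite{brezis1995degree} (since $W^{1,n} \hookrightarrow \mathrm{VMO}$ at the critical exponent). It is enough to prove the identity
\begin{equation*}
\mathcal{H}^n(\mathbb{S}^n) \, \deg(w) = \int_{\mathbb{R}^n} \det(v, \partial_1 v, \ldots, \partial_n v) \, dx,
\end{equation*}
whose right-hand side is an absolutely convergent integral because $|v|=1$ and AM--GM bound the Jacobian pointwise by a constant multiple of $|\nabla v|^n \in L^1(\mathbb{R}^n)$.

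The crucial computation is a conformal change of variables. Setting $\lambda(x) = 2/(1+|x|^2)$, the conformality of $\Phi$ says that $\tau_i(\Phi(x)) := \lambda(x)^{-1}\partial_i \Phi(x)$ forms an orthonormal tangent frame to $\mathbb{S}^n$, so $d\Phi^{-1}(\tau_i) = \lambda(x)^{-1} e_i$. The chain rule then gives $\partial_{\tau_i} w(\Phi(x)) = \lambda(x)^{-1}\partial_i v(x)$, and hence
\begin{equation*}
\det\bigl(w, \partial_{\tau_1} w, \ldots, \partial_{\tau_n} w\bigr)\circ \Phi(x) = \lambda(x)^{-n} \det(v, \partial_1 v, \ldots, \partial_n v)(x).
\end{equation*}
The volume element $d\mathcal{H}^n$ on $\mathbb{S}^n$ pulls back to $\lambda(x)^n dx$, so the two conformal factors cancel exactly and
\begin{equation*}
\int_{\mathbb{S}^n} \det\bigl(w, \partial_{\tau_1} w, \ldots, \partial_{\tau_n} w\bigr) \, d\mathcal{H}^n = \int_{\mathbb{R}^n} \det(v, \partial_1 v, \ldots, \partial_n v) \, dx.
\end{equation*}

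To close the argument, I would identify the left-hand side with $\mathcal{H}^n(\mathbb{S}^n) \, \deg(w)$. For smooth $\bar w \colon \mathbb{S}^n \to \mathbb{S}^n$ this is the classical Kronecker formula, since $\det(\bar w, \partial_{\tau_1}\bar w, \ldots, \partial_{\tau_n}\bar w)$ equals $\bar w^* \omega_{\mathbb{S}^n}(\tau_1, \ldots, \tau_n)$ for $\omega_{\mathbb{S}^n}$ the standard volume form on $\mathbb{S}^n$. The extension to $w \in W^{1,n}(\mathbb{S}^n;\mathbb{S}^n)$ proceeds by approximation: strong density of smooth $\mathbb{S}^n$-valued maps in $W^{1,n}(\mathbb{S}^n;\mathbb{S}^n)$ holds without topological obstruction at the critical exponent, and both sides are continuous along such approximations --- the right-hand side by H\"older, since the Jacobian is $n$-linear in the derivatives with coefficients uniformly bounded by $|w_k|=1$, and the left-hand side by the VMO-continuity of the degree \cite{brezis1995degree}. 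The smooth density step is the only mildly technical ingredient; beyond it, the proof amounts to the direct computation above.
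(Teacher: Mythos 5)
Your proposal is correct and follows essentially the same route as the paper: well-definedness of the degree of $w=v\circ\Phi^{-1}\in W^{1,n}(\mathbb S^n;\mathbb S^n)$ via \cite{brezis1995degree}, the Kronecker integral formula extended from $C^1$ maps by smooth approximation, and the conformal change of variables through $\Phi$ to transfer the formula to $\mathbb R^n$. The only difference is that you spell out the approximation and cancellation of conformal factors that the paper leaves implicit.
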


Another consequence of Proposition~\ref{p:crit_sob_Rn} concerns
the vanishing mean oscillation property: functions in $W^{1,n}(\mathbb S^n)$ have vanishing mean oscillation at every point \cite[\S~I.2]{brezis1995degree}.
Applying this to the function $w$ at $\mathbf e_{n+1}$ translates, after a change of variable, into vanishing mean oscillation ``at $\infty$'' for
any function $v\in\mathcal H(\R^n)$, namely
\begin{align*}
\Xint{-}_{|x|\geq R} |v-v_R|\, \frac{dx}{(1+|x|^2)^n} \to 0
\qquad\text{as }R\to\infty\,,
\end{align*}
where the average is taken with respect to the measure $dx/(1+|x|^2)^n$, 
and  $v_R$ is the mean value
\begin{align*}
v_R =\Xint{-}_{|x|\geq R} v \,\frac{dx}{(1+|x|^2)^n}\,.
\end{align*}
In general, that mean value need not have a limit as $R\to\infty$.
In fact, we have $v_R\to v_\infty$ as $R\to\infty$  if and only if $\mathbf e_{n+1}$ is a Lebesgue point of $w$, with value $w(\mathbf e_{n+1})=v_\infty$.
Note that the function $R\mapsto v_R$ is continuous, so the set of possible limits of $v_{R_k}$ along subsequence $R_k\to\infty$ is connected.
Moreover, if we happen to have the additional information that
\begin{align*}
\int_{\mathbb R^n}f(v)\,\frac{ dx }{(1+|x|^2)^{\frac n2}} <\infty\,,
\end{align*}
for some $L$-Lipschitz function $f$, then we deduce
that $f(v_R)\to 0$ as $R\to\infty$, thanks to the inequalities
\begin{align*}
&
|f(v_R)|
=\Xint{-}_{|x|\geq R}|f(v_R)| \frac{dx}{(1+|x|^2)^n}
\\
&
\leq L \Xint{-}_{|x|\geq R} |v-v_R|\frac{dx}{(1+|x|^2)^n}
+\Xint{-}_{|x|\geq R} |f(v)|\frac{dx}{(1+|x|^2)^n}
\\
&
\leq L \Xint{-}_{|x|\geq R} |v-v_R|\frac{dx}{(1+|x|^2)^n}
+C\int_{|x|\geq R} |f(v)|\,\frac{ dx }{(1+|x|^2)^{\frac n2}} \,,
\end{align*}
where we used that
\begin{align*}
\frac{1}{(1+R^2)^{\frac n2}}\leq C \int_{|x|\geq R}\frac{dx}{(1+|x|^2)^n}\,,
\end{align*}
for some constant $C>0$ depending on $n$.
If $v$ takes values into a closed set $X\subset\R^k$, applying this to $f=\dist(\cdot,X)$ shows that $\dist(v_R,X)\to 0$ as $R\to\infty$.
And if we know in addition that $g(v)$ is integrable on $\R^n$ for some Lipschitz function $g$, then all possible limits of subsequences $v_{R_k}$ must belong to a single connected component of $X\cap g^{-1}(\lbrace 0\rbrace)$.
If all such connected components are points, then $v_R$ converges and $\mathbf e_{n+1}$ is a Lebesgue point of $w$. 
Applying this to $n=2$, $X=\mathbb S^2\subset\R^3$ and $g(x)=1-x_3$ or $1-x_3^2$ for $x\in\mathbb S^2$, we deduce the following.

\begin{cor}\label{c:lebesgue_infty}
    For a map $\m  \in \mathcal H(\R^2;\mathbb S^2)$, if either $1-m_3^2$ or $1-m_3$ is integrable on $\R^2$, then $\mathbf e_{3}$ is a Lebesgue point of $w=\m\circ\Phi^{-1} \in W^{1,2}(\mathbb S^2;\mathbb S^2)$.
\end{cor}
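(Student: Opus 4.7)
The corollary is essentially a direct specialization of the general framework developed in the paragraphs immediately preceding it, so my plan is to carefully verify its hypotheses and invoke the deductions already recorded there. The first step is to rephrase the setting in terms of the sphere: by Proposition~\ref{p:crit_sob_Rn}, the map $w=\m\circ\Phi^{-1}$ belongs to $W^{1,2}(\mathbb S^2;\mathbb S^2)$, and since $W^{1,n}(\mathbb S^n)$ functions have vanishing mean oscillation at every point, we get at the point $\mathbf e_3$, after the change of variables $\xi=\Phi(x)$, the weighted oscillation estimate
\begin{align*}
\Xint{-}_{|x|\geq R}|\m-\m_R|\,\frac{dx}{(1+|x|^2)^2}\longrightarrow 0\qquad\text{as }R\to\infty,
\end{align*}
with $\m_R$ the weighted average introduced before the corollary. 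Moreover, the same discussion records that $\mathbf e_3$ is a Lebesgue point of $w$ if and only if the continuous curve $R\mapsto\m_R$ converges as $R\to\infty$; so the task reduces to proving this convergence.

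The second step is to apply the Lipschitz-integrability consequence spelled out in the preceding discussion: for any Lipschitz $g\colon\R^3\to\R$ with $g(\m)\in L^1(\R^2;dx)$, one has $g(\m_R)\to 0$, because Lebesgue-integrability implies weighted-integrability (the weight $(1+|x|^2)^{-1}$ is bounded). In our two cases we take $g(y)=1-y_3$ or $g(y)=1-y_3^2$, each Lipschitz on $\R^3$ (in fact on a neighborhood of $\mathbb S^2$) and integrable against $\m$ by hypothesis. Combined with the universal conclusion $\dist(\m_R,\mathbb S^2)\to 0$ (obtained by taking $f=\dist(\cdot,\mathbb S^2)$, which vanishes on the image of $\m$), this forces every subsequential limit of $\m_R$ to lie in
\begin{align*}
\mathbb S^2\cap g^{-1}(\{0\})=\begin{cases}\{\mathbf e_3\}&\text{if }g(y)=1-y_3,\\ \{\mathbf e_3,-\mathbf e_3\}&\text{if }g(y)=1-y_3^2.\end{cases}
\end{align*}

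The third and final step is a connectedness argument: since $R\mapsto\m_R$ is continuous, the set of its subsequential limits as $R\to\infty$ is connected in $\R^3$. In both cases above the target set consists of isolated points, so this set of subsequential limits must be a single point; hence $\m_R$ converges, and $\mathbf e_3$ is a Lebesgue point of $w$. There is no substantial obstacle in this proof, since the general machinery preceding the corollary was tailored exactly to yield such statements; the only minor point requiring care is checking that the Lipschitz hypothesis on $g$ and the Lebesgue-integrability of $g(\m)$ give the required weighted integrability, and that the connectedness argument correctly upgrades ``finitely many subsequential limits'' to an actual limit.
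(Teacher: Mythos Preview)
Your proposal is correct and follows exactly the paper's own approach: the corollary is stated immediately after the general discussion precisely as the specialization to $n=2$, $X=\mathbb S^2$, and $g(y)=1-y_3$ or $g(y)=1-y_3^2$, and your three steps reproduce that specialization faithfully. The only cosmetic point is that $g(y)=1-y_3^2$ is not globally Lipschitz on $\R^3$, but since $\m$ takes values in $\mathbb S^2$ and the averages $\m_R$ lie in the closed unit ball, Lipschitzness on that ball suffices, as you implicitly acknowledge.
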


We finally turn to the proof of Proposition~\ref{p:crit_sob_Rn}.

\begin{proof}[Proof of Proposition~\ref{p:crit_sob_Rn}]
First we show that, if $w\in W^{1,n}(\mathbb S^{n})$, then $v=w\circ\Phi$ belongs to $\mathcal H(\R^n)$ and \eqref{eq:conf_inv} is satisfied.
This follows  
by
 approximating
$w$ with smooth functions $w_\e\in C^1(\mathbb S^n)$
to which we apply \eqref{eq:conf_inv} and pass to the limit.
To make sure that $v_\e=w_\e\circ\Phi$ converges in $L^1_{\loc}(\R^n)$ to $v=w\circ\Phi$
 we use for instance
 the identity
 \begin{align}\label{eq:wL1_changevar}
\int_{\mathbb S^n}u(y) \,d\mathcal H^n (y)
=
\int_{\R^n} u \circ\Phi(x) \frac{2^n dx}{(1+|x|^2)^n}\,,
\end{align}
valid for any measurable $u\colon \mathbb S^n\to [0,\infty]$,
applied to $u=|w-w_\e|$.

Reciprocally, let us 
assume that $v\in\mathcal H(\R^n)$, and prove that $w=v\circ\Phi^{-1}$ belongs to $W^{1,n}(\mathbb S^n)$.
From localized versions of the identities \eqref{eq:conf_inv}-\eqref{eq:wL1_changevar} we see that
\begin{align*}
w\in W^{1,n}_{\loc}(\mathbb S^n\setminus \lbrace\mathbf e_{n+1}\rbrace)
\quad\text{and}
\quad
\nabla_T w\in L^n( \mathbb S^n\setminus \lbrace\mathbf e_{n+1}\rbrace),
\end{align*}
where $\nabla_T w$ is the distributional gradient of $w$ in
 $\mathbb S^n\setminus \lbrace\mathbf e_{n+1}\rbrace$.
In order to conclude that   $w\in W^{1,n}(\mathbb S^n)$, it suffices to show that $w\in L^n(\mathbb S^n)$.
This implies indeed that $w\in W^{1,n}(\mathbb S^n\setminus \lbrace \mathbf e_{n+1}\rbrace)$ and therefore $w\in W^{1,n}(\mathbb S^n)$
since, using the terminology of \cite{koskela99},
 points are removable for Sobolev spaces in dimension $n\geq 2$.
We recall here the proof of that fact: working in a local chart, this amounts to showing that, if $\Omega=(-1,1)^n$ and 
$u\in W^{1,p}(\Omega\setminus\lbrace 0\rbrace)$ for some $p\geq 1$, 
then the distributional gradient of $u$ on $\Omega$ is equal to its distributional gradient on $\Omega\setminus \lbrace 0\rbrace$.
To check this, note that for almost every $x' = (x_2,\ldots,x_n)\in \Omega'= (-1,1)^{n-1}\setminus\lbrace 0\rbrace$ we have $u(\cdot,x')\in W^{1,p}((-1,1))$ with distributional derivative given by $\partial_1 u(\cdot,x')\in L^p((-1,1))$,
so  for any   $\varphi\in C_c^\infty(\Omega)$ we can compute
\begin{align*}
\langle \partial_1 u,\varphi\rangle 
&
=- \int_{\Omega'} \int_{-1}^1  u\,\partial_1\varphi\, dx_1 dx' 
=\int_{\Omega'}\int_{-1}^1 \varphi\, \partial_1 u   \, dx_1 dx' 
\\
&
=\int_{\Omega\setminus\lbrace 0\rbrace}\varphi\, \partial_1 u  \, dx\,,
\end{align*}
and similarly for the other derivatives.
In view of \eqref{eq:wL1_changevar}, the fact that $w\in L^n(\mathbb S^n)$ 
is ensured by Lemma~\ref{l:vLp} below.
\end{proof}

\begin{rem}
It would have been tempting to prove the second implication ($v\in \mathcal H\Rightarrow w\in W^{1,n}$) by approximation
with smooth functions, as 
for the first implication.
But there is an additional difficulty: 
while 
 $w\in C^1(\mathbb S^n)$ implies $v=w\circ\Phi\in C^1\cap\mathcal H(\R^n)$,
the converse implication it not true.
For instance, the function $v(x)=\ln(1+\ln(1+|x|^2))$
belongs to $C^1\cap\mathcal H(\R^n)$, 
but $w=v\circ\Phi^{-1}$ is not continuous at $\mathbf e_{n+1}\in\mathbb S^n$. (In fact $\mathbf e_{n+1}$ is not even a Lebesgue point of $w$.)
That is why we argued
 differently.
\end{rem}

\begin{lem}\label{l:vLp}
For all $v\in\mathcal H(\R^n)$  we have
\begin{align*}
\int_{\R^n}|v|^n \frac{dx}{(1+|x|^2)^n} 
\leq c  \int_{\R^n}|\nabla v|^n\, dx   + c  \int_{B_1}|v|^n\, dx  \,,
\end{align*}
for some constant $c>0$ depending on $n$.
\end{lem}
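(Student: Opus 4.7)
The inequality is a weighted Hardy-type estimate in the scaling-critical exponent: the weight $(1+|x|^2)^{-n}$ is essentially the Jacobian of the stereographic projection, so the claim amounts to saying that a function in $\mathcal H(\R^n)$ lifts to $L^n(\mathbb S^n)$. My plan is to decompose $\R^n$ into the unit ball $B_1$ and the dyadic annuli $A_k=\{2^{k-1}\le |x|\le 2^k\}$ for $k\ge 1$, and to exploit the natural scaling of Poincaré's inequality on each piece.

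Let $c_0$ denote the mean value of $v$ on $B_1$ and $c_k$ that on $A_k$. On $B_1$ the weight satisfies $(1+|x|^2)^{-n}\le 1$, so no work is needed. On $A_k$ (for $k\ge 1$), using $1+|x|^2\sim 2^{2k}$, $|A_k|\sim 2^{kn}$, and the Poincaré inequality at scale $2^k$,
\begin{align*}
\int_{A_k}|v-c_k|^n\,dx \le C\,2^{kn}\int_{A_k}|\nabla v|^n\,dx,
\end{align*}
a triangle inequality would yield
\begin{align*}
\int_{A_k}\frac{|v|^n}{(1+|x|^2)^n}\,dx \le C\,2^{-kn}\int_{A_k}|\nabla v|^n\,dx + C\,2^{-kn}|c_k|^n.
\end{align*}
The first term sums geometrically in $k$, so the task reduces to controlling $\sum_{k\ge 1}2^{-kn}|c_k|^n$.

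For this I would apply Poincaré on the enlarged annulus $A_{k-1}\cup A_k$ (or on $B_2$ when $k=1$) to obtain $|c_k-c_{k-1}|^n\le C\int_{A_{k-1}\cup A_k}|\nabla v|^n\,dx$. Telescoping $c_k=c_0+\sum_{j=1}^k(c_j-c_{j-1})$ and applying Jensen's inequality then give
\begin{align*}
|c_k|^n \le C|c_0|^n + Ck^{n-1}\sum_{j=1}^{k}|c_j-c_{j-1}|^n \le C|c_0|^n + Ck^{n-1}\int_{B_{2^k}}|\nabla v|^n\,dx.
\end{align*}
Multiplying by $2^{-kn}$, summing in $k$, and interchanging sum and integral, I would use that $\sum_{k\ge k_0}k^{n-1}2^{-kn}$ is bounded by an absolute constant uniformly in $k_0$, which yields $\sum_{k\ge 1}2^{-kn}|c_k|^n\le C|c_0|^n+C\int_{\R^n}|\nabla v|^n\,dx$. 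Since $|c_0|^n\le C\int_{B_1}|v|^n\,dx$ by Jensen, the claimed bound follows.

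The only delicate point I foresee is the polynomial factor $k^{n-1}$ produced by Jensen's inequality in the telescoping step; this remains harmless precisely because the weight $2^{-kn}$ is summable against $k^{n-1}$, with a tail uniformly bounded in $x$. Otherwise the argument is a standard application of the Poincaré inequality at dyadic scales.
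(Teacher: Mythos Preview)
Your argument is correct. The dyadic chaining is sound: Poincar\'e on each annulus, the telescoping bound on the means via Poincar\'e on overlapping annuli, and then the observation that the loss $k^{n-1}$ from H\"older is absorbed by the geometric weight $2^{-kn}$. One cosmetic remark: when you say the first term ``sums geometrically,'' you are actually using disjointness of the $A_k$ rather than a geometric series (the factor $2^{-kn}\le 1$ is even superfluous there), but the conclusion stands.

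The paper's proof takes a different route. Instead of dyadic annuli it works with the continuous spherical average $f(r)=\Xint{-}_{\mathbb S^{n-1}}v(r\omega)\,d\mathcal H^{n-1}$, bounds $|f(R)-f(r_0)|$ by the fundamental theorem of calculus and H\"older to obtain logarithmic growth $|f(R)|^n\le cA(v)\ln^n(4R)$, and then controls the oscillation on each sphere via the critical Sobolev embedding $W^{1,n}(\mathbb S^{n-1})\hookrightarrow L^\infty(\mathbb S^{n-1})$, yielding a pointwise bound $|v(r\omega)|^n\le c|f(r)|^n+cr^n\int_{\mathbb S^{n-1}}|\nabla v|^n(r\tilde\omega)\,d\mathcal H^{n-1}$ that is integrated against $r^{-(n+1)}\,dr$. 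Your approach is arguably more elementary in that it avoids the borderline Sobolev embedding on spheres and uses only the scale-invariant Poincar\'e inequality; it is also robust to perturbations of the weight. The paper's argument, on the other hand, exploits the radial structure more directly and is slightly shorter.
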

\begin{proof}[Proof of Lemma~\ref{l:vLp}]
We introduce the notation
\begin{align*}
A(v)=  \int_{\R^n}|\nabla v|^n\, dx  + c  \int_{B_1}|v|^n\, dx  \,,
\end{align*}
and denote by $c>0$ a generic constant depending on $n$, which may change from line to line.
By Sobolev inequality in  $B_1\subset\R^n$ we have
\begin{align*}
\int_{B_1}|v|^n\, dx \leq c A(v)\,,
\end{align*}
so it suffices to show that
\begin{align*}
\int_{|x|\geq 1} |v|^n\frac{dx}{|x|^{2n}}
\leq c A(v)
\,.
\end{align*}
To that end we consider the function 
\begin{align*}
f(r)=\Xint{-}_{\mathbb S^{n-1}}v(r\omega)\,d\mathcal H^{n-1}(\omega)\quad\forall r\geq 1\,.
\end{align*}
We fix $r_0\in (1/2,1)$ such that
\begin{align*}
|f(r_0)| \leq c\int_{B_1}|v|\, dx \,.
\end{align*}
For all $R\geq 1$ we have
\begin{align*}
|f(R)-f(r_0)|
&
 \leq \int_{r_0}^R \Xint{-}_{\mathbb S^{n-1}}|\partial_r v|(r\omega) \, d\mathcal H^{n-1}(\omega) \,dr
\\
&
\leq \int_{B_R\setminus B_{r_0}}|\nabla v|\, \frac{dx}{|x|^{n-1}}
\\
&
\leq \bigg(\int_{\R^n}|\nabla v|^n\, dx \bigg)^{\frac 1n}
\bigg(\int_{B_R\setminus B_{r_0}} \frac{dx}{|x|^n}\bigg)^{1-\frac 1n}
\\
&
\leq c A(v)^{\frac 1n} \ln(2R/r_0)\,,
\end{align*}
and therefore
\begin{align*}
|f(R)|^n \leq c A(v)\ln^n(4R)\,,\quad\forall R\geq 1\,.
\end{align*}
By Sobolev embedding $W^{1,n}(\mathbb S^{n-1})\subset L^\infty(\mathbb S^{n-1})$ and definition of $f(r)$, 
for all $r\geq 1$ and $\omega\in\mathbb S^{n-1}$ we have
\begin{align*}
|v(r\omega)|^n \leq c |f(r)|^n  + 
c r^n   \int_{\mathbb S^{n-1}}|\nabla v|^n(r\tilde\omega)\,d\mathcal H^{n-1}(\tilde\omega) \,.
\end{align*}
Using the two last inequalities we deduce
\begin{align*}
\int_{|x|\geq 1}|v|^n\, \frac{dx}{|x|^{2n}}
&
=
\int_{\mathbb S^{n-1}} \int_{1}^\infty |v(r\omega)|^n\,\frac{dr}{r^{n+1}}\, d\mathcal H^{n-1}(\omega) 
\\
&
\leq c\int_1^{\infty}|f(r)|^{n}\, \frac{dr}{r^{n+1}} 
+ c 
\int_{|x|\geq 1}|\nabla v|^n\, \frac{dx}{|x|^n} 
\\
&
\leq c A(v) \,,
\end{align*}
and this concludes the proof.
\end{proof}

\bibliographystyle{acm}
\bibliography{ref_bimeron.bib}

\end{document}